\newtheorem{theorem}{Theorem}[section]
\theoremstyle{definition}
\theoremstyle{remark}
\numberwithin{equation}{section}
\begin{document}

\title{High-order Algorithms for Riesz Type Turbulent Diffusion Equation}


\author{Hengfei Ding}
\address{Department of Mathematics, Shanghai University, Shanghai 200444, China.}
\email{dinghf05@163.com}
\thanks{}

\author{Changpin Li}
\address{Department of Mathematics, Shanghai University, Shanghai 200444, China.}
\curraddr{ }
\email{lcp@shu.edu.cn}
\thanks{The work was partially supported by the National
Natural Science Foundation of China under Grant No. 11372170,
Key Program of Shanghai Municipal Education Commission
under Grant No. 12ZZ084, and the grant of ``The First-class
Discipline of Universities in Shanghai".}

\subjclass[2010]{65M06, 65M12 }

\keywords{Riesz derivative, Riesz space fractional turbulent diffusion
 equation, high-order algorithms}

\date{}

\dedicatory{}

\begin{abstract}
Numerical methods for fractional calculus attract increasing
interests due to its wide applications in various fields such as
physics, mechanics, etc. In this paper, we focus on constructing
high-order algorithms for Riesz derivatives, where the convergence
orders cover from the second order to the sixth order. Then we
apply the established schemes to the Riesz type 
turbulent diffusion equation (or, Riesz space fractional turbulent diffusion equation). Numerical experiments are displayed
which support the theoretical analysis.
\end{abstract}

\maketitle

\section{Introduction}

In recent years, fractional calculus has attracted
increasing interests due to its potential applications in physics,
mechanics, etc. For more details, see the review articles \cite{met}
and the recent studies \cite{ervin,tad,wang,mus,DengHesthaven2003,jin,zay}, and references cited therein. As far as we know,
the Riemann-Liouville (RL) derivative and Caputo derivative are commonly
used, which are respectively defined below

\begin{displaymath}
\,_{RL}D_{a,x}^\alpha f(x)
=\frac{1}{\Gamma(n-\alpha)}\frac{d^n}
{dx^n}\int_{a}^{x}(x-\xi)^{n-\alpha-1}f(\xi)d\xi,\;x\in(a,b),\;n-1<\alpha<n\in Z^{+},
\end{displaymath}
and,
\begin{displaymath}
\,_{C}D_{a,x}^\alpha f(x)
=\frac{1}{\Gamma(n-\alpha)}
\int_{a}^{x}(x-\xi)^{n-\alpha-1}f^{(n)}(\xi)d\xi,\;x\in(a,b),\;n-1<\alpha<n\in Z^{+}.
\end{displaymath}
These two definitions are generally not equivalent.

A linear combination of the left RL derivative (defined above) and the right RL derivative defined below
\begin{displaymath}
\,_{RL}D_{x,b}^\alpha f(x)
=\frac{(-1)^n}{\Gamma(n-\alpha)}\frac{d^n}
{dx^n}\int_{x}^{b}(\xi-x)^{n-\alpha-1}f(\xi)d\xi,\;x\in(a,b),\;n-1<\alpha<n\in Z^{+},\vspace{0.3cm}\\
\end{displaymath}
defines a new derivative, i.e., Riesz derivative,
$$
\begin{array}{ccc}\label{2}
\displaystyle \frac{d^\alpha
f(x)}{d{|x|^\alpha}}=-\frac{1}{2\cos\left(\pi\alpha/2\right)}\left
(\,_{RL}D_{a,x}^\alpha+\,_{RL}D_{x,b}^\alpha\right)f(x).
\end{array}
$$

The left RL derivative reflects the memory (or, the dependence on the history),
while the right RL derivative ``looks forward to'' the future (or, the dependence upon the future). So the
Riesz derivative value of $f(x)$ at $x$ relies on the whole space $(a,b)$, but the values are different at different $x\in(a,b)$.
 From an angle of application, the case with $\alpha\in(0,2)$ is mostly often attracted attention.
 If $\alpha=0$, then $ \frac{d^\alpha f(x)}{d{|x|^\alpha}}=f(x)$; if $\alpha=1$, generally set $\frac{d^\alpha f(x)}{d{|x|^\alpha}}=f'(x)$ for convenience; if $\alpha=2$, then it is reduced to $f''(x)$.

From the studies available, the high-order algorithms for RL derivatives were established in \cite{lub}. 
In \cite{liding2}, Ding et al.
 established the sixth-order, eighth-order, tenth-order and twelfth-order schemes for the Riesz derivative by using the Fourier analysis. Then they used the former two schemes to the Riesz space fractional reaction-dispersion equation where the rigorous error analyses were given. The other odd-order (third-order, fifth-order, seventh-order, ninth-order, eleventh-order) schemes can be established from \cite{liding2} by choosing the different parameters therein. In spite of these, it is absolutely necessary to construct intuitionist and straightforward schemes for the Riesz derivatives. Here we find an interesting and enlightening way to establish high-order schemes (from 2nd-order to 6th-order) by using the corresponding generating functions \cite{lub}. Then we use these schemes to solve the Riesz space fractional turbulent diffusion equation.

In the following, we briefly introduce fractional modelling in this respect.
To the best of our knowledge, from the known first Fick's law
$$
\begin{array}{rrr}
\displaystyle {J}(x,t)=d_1
u(x,t)-d_2\frac{\partial u(x,t)}{\partial x} ,\;d_1>0,\;d_2>0,
\end{array}
$$
one can obtain the following advection-diffusion equation,
$$
\begin{array}{rrr}
\displaystyle\frac{\partial u(x,t)}{\partial t}
=-d_1\frac{\partial
u(x,t)}{\partial x}+d_2\frac{\partial^2u(x,t)}{\partial x^2} .
\end{array}
$$

If the advection-diffusion process at any position $x\in(a,b)$ relies on the whole space $(a,b)$ (i.e., long-range interactions),
 then the classical Fick's law does not work well yet. However, the fractional derivative can well characterize
  such long-range interactions. Now we generalize the typical Fick's law to a fractional version,
$$
\begin{array}{rrr}
\displaystyle {J}_\alpha(x,t)=d_1 u(x,t)-d_2\frac{\partial u(x,t)}{\partial
x}-d_{\alpha}\frac{\partial^{\alpha-1}  u(x,t)}{\partial
|x|^{\alpha-1}},\;\;\alpha\in(0,1),
\end{array}
$$
where $d_1$, $d_2$, $d_{\alpha}$ are positive constants. It follows that the
Riesz space fractional turbulent diffusion equation (or, Riesz type turbulent diffusion equation) is obtained,
$$
\begin{array}{rrr}
\displaystyle \frac{\partial u(x,t)}{\partial t} =-d_1\frac{\partial
u(x,t) }{\partial x}+d_2\frac{\partial^2 u(x,t)}{\partial
x^2}+d_{\alpha}\frac{\partial^{\alpha} u(x,t)}{\partial
|x|^{\alpha}}.
\end{array}
$$

If there is a source term, then one has
\begin{equation}
\displaystyle \frac{\partial u(x,t)}{\partial t} =-d_1\frac{\partial
u(x,t) }{\partial x}+d_2\frac{\partial^2 u(x,t)}{\partial
x^2}+d_{\alpha}\frac{\partial^{\alpha} u(x,t)}{\partial
|x|^{\alpha}}+s(x,t),\label{eq.1}
\end{equation}
where the Riesz partial derivative with
order $\alpha\in(0,1)$ is given by
\begin{equation}
\displaystyle \frac{\partial^\alpha
u(x,t)}{\partial{|x|^\alpha}}=-\frac{1}{2\cos\left(\pi\alpha/2\right)}\left
(\,_{RL}D_{a,x}^\alpha+\,_{RL}D_{x,b}^\alpha\right)u(x,t),\;0<\alpha<1.\label{eq.2}
\end{equation}
Equ. (\ref{eq.1}) is subject to the following initial value condition,
$$
\begin{array}{rrr}
\displaystyle u(x,0)=\varphi(x),\;x\in[a,b],
\end{array}
$$
and the boundary value conditions (here choosing the homogeneous condition for brevity),
$$
\begin{array}{rrr}
\displaystyle u(a,t)=0,\;\; u(b,t)=0
,\;\;\;\;\;t\geq 0.
\end{array}
$$

The remainder of this paper is outlined as follows. In Section 2,
 five kinds of high-order (2nd-order, 3rd-order, $\cdots$, 6th-order) algorithms for the Riesz derivatives
  are developed. In the next section, we apply the derived
   schemes to the Riesz-type turbulent diffusion equation (\ref{eq.1}). Here we only use the 2nd-order,
    4th-order, 6th-order schemes to (\ref{eq.1}). The convergence orders are $\mathcal{O}(\tau^2+h^2)$, $\mathcal{O}(\tau^2+h^4)$,
     and $\mathcal{O}(\tau^2+h^6)$, where $\tau$ and $h$ are temporal and spatial stepsizes, respectively.
   In Section 4, numerical examples are displayed which support the theoretical analysis. The last section concludes this article.

\section{High-order numerical schemes}

If
$f^{(k)}(a+)=0\;(k=0,1,\ldots,p-1)$, then it follows from \cite{lub} that the left RL derivative has the
following approximations
\begin{equation}
\displaystyle \,_{RL}{{D}}_{a,x}^{\alpha}f(x)=
\frac{1}{h^{\alpha}}\sum\limits_{\ell=0}^{\infty}\varpi_{p,\ell}^{(\alpha)}f(x-\ell
h)+\mathcal {O}(h^p),\label{eq.3}
\end{equation}
in which $h$ is the steplength. Here we only show interests in $p=2,3,4,5,6$.

 The convolution (or weight) coefficients $\varpi_{p,\ell}^{(\alpha)}$ in
the above equations are those of the Taylor series expansions of the
corresponding generating functions $W_p^{(\alpha)}(z)$,
$$
\begin{array}{lll}
\displaystyle W_p^{(\alpha)}(z)=
\sum\limits_{\ell=0}^{\infty}\varpi_{p,\ell}^{(\alpha)}z^\ell, \;\alpha\in(0,2),
\end{array}
$$
where
$$\begin{array}{lll}
\displaystyle
W_2^{(\alpha)}(z)=\left(\frac{3}{2}-2z+\frac{1}{2}z^2\right)^{\alpha},\vspace{0.2 cm}\\
 \displaystyle
W_3^{(\alpha)}(z)=\left(\frac{11}{6}-3z+\frac{3}{2}z^2-\frac{1}{3}z^3\right)^{\alpha},\vspace{0.2 cm}\\
 \displaystyle
W_4^{(\alpha)}(z)=\left(\frac{25}{12}-4z+3z^2-\frac{4}{3}z^3+\frac{1}{4}z^4\right)^{\alpha},\vspace{0.2 cm}\\
 \displaystyle
W_5^{(\alpha)}(z)=\left(\frac{137}{60}-5z+5z^2-\frac{10}{3}z^3+\frac{5}{4}z^4-
\frac{1}{5}z^5\right)^{\alpha},\vspace{0.2 cm}\\
 \displaystyle
W_6^{(\alpha)}(z)=\left(\frac{147}{60}-6z+\frac{15}{2}z^2-\frac{20}{3}z^3+\frac{15}{4}z^4-
\frac{6}{5}z^5+\frac{1}{6}z^6\right)^{\alpha}.
\end{array}
$$

By tedious but direct calculations, one has
$$
\begin{array}{lll}
 \displaystyle\varpi_{2,\ell}^{(\alpha)}=\displaystyle
\left(\frac{3}{2}\right)^{\alpha}\sum\limits_{\ell_1=0}^{\ell}
\left(\frac{1}{3}\right)^{\ell_1}\varpi_{1,\ell_1}^{(\alpha)}
\varpi_{1,\ell-\ell_1}^{(\alpha)},
\end{array}
$$
$$\begin{array}{lll}
\displaystyle
\varpi_{3,\ell}^{(\alpha)}=\left(\frac{11}{6}\right)^{\alpha}
\sum\limits_{\ell_1=0}^{\ell}
\sum\limits_{\ell_2=0}^{\left[\frac{1}{2}\ell_1\right]}
\left(-1\right)^{\ell_2}\left(\frac{7}{11}\right)^{\ell_1-\ell_2}
\left(\frac{2}{7}\right)^{\ell_2}
\frac{(\ell_1-\ell_2)!}{\ell_2!(\ell_1-2\ell_2)!}
\varpi_{1,\ell-\ell_1}^{(\alpha)}\varpi_{1,\ell_1-\ell_2}^{(\alpha)},
\end{array}
$$
$$\begin{array}{lll}
\displaystyle
\varpi_{4,\ell}^{(\alpha)}\hspace{-0.3cm}&=&\hspace{-0.3cm}\displaystyle\left(\frac{25}{12}\right)^{\alpha}
 \sum\limits_{\ell_1=0}^{\ell}
\sum\limits_{\ell_2=0}^{\left[\frac{2}{3}\ell_1\right]}
\sum\limits_{\ell_3=\max\{0,2\ell_2-\ell_1\}}^{\left[\frac{1}{2}\ell_2\right]}\left(-1\right)^{\ell_2}
\left(\frac{23}{25}\right)^{\ell_1-\ell_2}\left(\frac{13}{23}\right)^{\ell_2-\ell_3}\left(\frac{3}{13}\right)^{\ell_3}
\times\vspace{0.2 cm}\\
 \displaystyle
&& \displaystyle
\frac{\left(\ell_1-\ell_2\right)!}{\ell_3!\left(\ell_2-2\ell_3\right)!\left(\ell_1+\ell_3-2\ell_2\right)!}
\varpi_{1,\ell-\ell_1}^{(\alpha)}\varpi_{1,\ell_1-\ell_2}^{(\alpha)},
\end{array}
$$
$$\begin{array}{lll}
\varpi_{5,\ell}^{(\alpha)}
\hspace{-0.3cm}&=&\hspace{-0.3cm}\displaystyle\left(\frac{137}{60}\right)^{\alpha}
 \sum\limits_{\ell_1=0}^{\ell}
\sum\limits_{\ell_2=0}^{\left[\frac{3}{4}\ell_1\right]}
\sum\limits_{\ell_3=\max\{0,2\ell_2-\ell_1\}}^{\left[\frac{2}{3}\ell_2\right]}
\sum\limits_{\ell_4=\max\{0,2\ell_3-\ell_2\}}^{\left[\frac{1}{2}\ell_3\right]}
\left(-1\right)^{\ell_2}
\left(\frac{163}{137}\right)^{\ell_1-\ell_2}
\left(\frac{137}{163}\right)^{\ell_2-\ell_3}
\vspace{0.2 cm}\\
 &&\hspace{-0.3cm}\displaystyle\times
\left(\frac{63}{137}\right)^{\ell_3-\ell_4}
\left(\frac{4}{21}\right)^{\ell_4}
\frac{\left(\ell_1-\ell_2\right)!\;}{\ell_4!\left(\ell_3-2\ell_4\right)
!\left(\ell_1+\ell_3-2\ell_2\right)!\left(\ell_2+\ell_4-2\ell_3\right)!}\varpi_{1,\ell-\ell_1}^{(\alpha)}
\varpi_{1,\ell_1-\ell_2}^{(\alpha)},
\end{array}
$$
and
$$\begin{array}{lll}
\displaystyle
\varpi_{6,\ell}^{(\alpha)}\hspace{-0.3cm}&=&\hspace{-0.3cm}\displaystyle\left(\frac{147}{60}\right)^{\alpha}
 \sum\limits_{\ell_1=0}^{\ell}
\sum\limits_{\ell_2=0}^{\left[\frac{4}{5}\ell_1\right]}
\sum\limits_{\ell_3=\max\{0,2\ell_2-\ell_1\}}^{\left[\frac{3}{4}\ell_2\right]}
\sum\limits_{\ell_4=\max\{0,2\ell_3-\ell_2\}}^{\left[\frac{2}{3}\ell_3\right]}
\sum\limits_{\ell_5=\max\{0,2\ell_4-\ell_3\}}^{\left[\frac{1}{2}\ell_4\right]}
\vspace{0.2 cm}\\
 &&\displaystyle
 \left.\left(-1\right)^{\ell_2}
 \left(\frac{213}{147}\right)^{\ell_1-\ell_2}
\left(\frac{237}{213}\right)^{\ell_2-\ell_3}
\left(\frac{163}{237}\right)^{\ell_3-\ell_4}
\left(\frac{62}{163}\right)^{\ell_4-\ell_5}
\left(\frac{5}{31}\right)^{\ell_5}\times \right.
\vspace{0.2 cm}\\
 &&\displaystyle
\frac{\left(\ell_1-\ell_2\right)!\;}{\ell_5!\left(\ell_4-2\ell_5\right)
!\left(\ell_1+\ell_3-2\ell_2\right)!\left(\ell_2+\ell_4-2\ell_3\right)!
\left(\ell_3+\ell_5-2\ell_4\right)! }\varpi_{1,\ell-\ell_1}^{(\alpha)}\varpi_{1,\ell_1-\ell_2}^{(\alpha)}
,\vspace{0.5 cm}\\
&&\ell=0,1,\ldots.
\end{array}
$$
Here $\varpi_{1,j}^{(\alpha)}$ is the first order coefficients defined by
$\varpi_{1,j}^{(\alpha)}=(-1)^j\frac{\Gamma(1+\alpha)}{\Gamma(j+1)\Gamma(1+\alpha-j)}$, $j=0,1,\cdots$.
If $j\ge 2$, then $\varpi_{1,j-1}^{(\alpha)}\le\varpi_{1,j}^{(\alpha)}$ for $\alpha\in (0,1)$
whilst $\varpi_{1,j-1}^{(\alpha)}\ge\varpi_{1,j}^{(\alpha)}$ for $\alpha\in (1,2)$. See \cite{liding3} for more information.

On the other hand, if
$f^{(k)}(b-)=0\;(k=0,1,\ldots,p-1)$, then one has the approximations below,
\begin{equation}
\displaystyle \,_{RL}{{D}}_{x,b}^{\alpha}f(x)=
\frac{1}{h^{\alpha}}\sum\limits_{\ell=0}^{\infty}\varpi_{p,\ell}^{(\alpha)}f(x+\ell
h)+\mathcal {O}(h^p),\;\; p=2,\cdots,6,\label{eq.4}
\end{equation}
where $h$ is also the stepsize.

Based on (\ref{eq.3}) and (\ref{eq.4}), if $f(x)$, together with its derivatives, has homogeneous boundary value conditions, one
easily gets
\begin{equation}
\displaystyle \frac{\partial^\alpha f(x)}{\partial{|x|^\alpha}}=
-\frac{1}{2\cos\left(\pi\alpha/2\right)h^\alpha}\sum\limits_{\ell=0}^{\infty}\varpi_{p,\ell}^{(\alpha)}\left
(f(x-\ell
h)+f(x+\ell
h)\right)+\mathcal {O}(h^p).\label{eq.5}
\end{equation}
Since $\alpha\in(0,1)$ is commonly used, we limit our interests in $\alpha\in (0,1)$.
  The case $\alpha\in(1,2)$ can be similarly studied.
When $\alpha=1$, we often set $\frac{\partial^\alpha f(x)}{\partial{|x|^\alpha}}=f'(x)$
 which is the trival case so is omitted here.

 The properties of the convolution coefficients $\varpi_{p,\ell}^{(\alpha)}$ are very important for constructing
 effective numerical algorithms for Riemann-Liouville time fractional differential equations and
 Riemann-Liouville (or Riesz) space fractional differential equations. So we first study these convolution
 coefficients.
 For the space fractional differential
equations, we use the following properties of the
coefficients $\varpi_{p,\ell}^{(\alpha)}$ to show the stability and convergence of the
derived algorithms.
Here we focus on studying the case $\alpha\in(0,1)$.

\begin{theorem}\label{th.2} For $0<\alpha<1$, then the following
inequalities hold:
$$
\begin{array}{lll}
\displaystyle
\sum\limits_{\ell=0}^{\infty}\varpi_{p,\ell}^{(\alpha)}\cos(\ell\theta)\geq0,\;\theta\in[-\pi,\pi],\;p=2,3,5,6.
\end{array}
$$
\end{theorem}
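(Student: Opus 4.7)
The plan is to transfer the question to the generating function. Since $\sum_{\ell=0}^{\infty} \varpi_{p,\ell}^{(\alpha)} z^\ell = W_p^{(\alpha)}(z) = P_p(z)^\alpha$ (where $P_p(z)$ denotes the polynomial inside the parentheses) and the weights decay like $O(\ell^{-1-\alpha})$, the Fourier series on the unit circle converges absolutely, and
\[
\sum_{\ell=0}^{\infty} \varpi_{p,\ell}^{(\alpha)} \cos(\ell\theta) \;=\; \operatorname{Re}\bigl(P_p(e^{i\theta})^\alpha\bigr) \;=\; \rho_p(\theta)^\alpha \cos\bigl(\alpha\,\phi_p(\theta)\bigr),
\]
where $\rho_p(\theta) = |P_p(e^{i\theta})|$ and $\phi_p(\theta)$ is the branch of $\arg P_p(e^{i\theta})$ obtained by continuous extension from $z=0$ (at which $P_p(0) > 0$). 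The theorem is therefore reduced to the pointwise estimate $|\alpha\,\phi_p(\theta)| \le \pi/2$ for all $\theta \in [-\pi,\pi]$.

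A clean sufficient condition is the $\alpha$-independent bound $\operatorname{Re}(P_p(e^{i\theta})) \ge 0$, equivalently $|\phi_p(\theta)| \le \pi/2$. For $p = 2$ this follows from the identity
\[
\operatorname{Re}\bigl(P_2(e^{i\theta})\bigr) \;=\; \tfrac{3}{2} - 2\cos\theta + \tfrac{1}{2}\cos 2\theta \;=\; (1 - \cos\theta)^2,
\]
obtained via the double-angle formula. For the remaining cases $p = 3, 5, 6$, I would expand $\operatorname{Re}(P_p(e^{i\theta}))$ as a polynomial $Q_p(c)$ of degree $p$ in $c = \cos\theta$ using multiple-angle formulas, and then verify $Q_p(c) \ge 0$ on $[-1,1]$ by locating its roots and checking values at its critical points. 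An alternative parameterization exploits the closed form $P_p(z) = \sum_{j=1}^p (1-z)^j/j$ combined with the polar representation $1 - e^{i\theta} = 2\sin(\theta/2)\,e^{i(\theta-\pi)/2}$, reducing the problem to the trigonometric inequality $\sum_{j=1}^p s^j \cos(j\psi)/j \ge 0$ with $s = 2\sin(\theta/2)$ and $\psi = (\theta-\pi)/2$.

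The main obstacle is the case-by-case verification for $p = 3, 5, 6$, compounded by the deliberate exclusion of $p = 4$, which rules out any argument uniform in $p$ and signals that the scheme of analysis must be specialized to each polynomial. Should the strong condition $\operatorname{Re}(P_p(e^{i\theta})) \ge 0$ fail for some $p$, the fallback is the finer bound $|\phi_p(\theta)| \le \pi/(2\alpha)$: one would use the symmetry $\phi_p(-\theta) = -\phi_p(\theta)$ to restrict to $\theta \in [0,\pi]$, locate the extrema by solving $\phi_p'(\theta) = 0$, and verify the estimate at those critical points, leveraging $\alpha < 1$. The routine tasks of justifying the radial-limit identification and confirming that the power-series branch matches the continuous extension from the origin follow from absolute convergence and from $P_p$ having no zeros in the open unit disk.
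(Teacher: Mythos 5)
Your reduction of the sum to $\rho_p(\theta)^\alpha\cos\bigl(\alpha\,\phi_p(\theta)\bigr)$ is precisely the paper's device: the paper factors the base polynomial as $(1-z)$ times a zero-free factor, writes the phase as $\frac{\theta-\pi}{2}-\arctan(\mu/\lambda)$, and proves by a monotonicity argument that this phase ranges over $[-\pi/2,0]$ on $[0,\pi]$. You obtain the same bound $|\phi_2(\theta)|\le\pi/2$ from the identity $\operatorname{Re}P_2(e^{i\theta})=(1-\cos\theta)^2$, which for $p=2$ is complete and if anything cleaner (granting the minimum-principle step that nonnegativity of $\operatorname{Re}P_2$ on the circle forces the continuous branch of the argument to stay in $[-\pi/2,\pi/2]$).

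The gap is in the deferred cases $p=3,5,6$, and it is not a routine verification: the sufficient condition $\operatorname{Re}P_p(e^{i\theta})\ge0$ that you plan to check is false there. Indeed $\operatorname{Re}P_3(e^{i\pi/3})=\frac{11}{6}-\frac{3}{2}-\frac{3}{4}+\frac{1}{3}=-\frac{1}{12}$, while $\operatorname{Re}P_5(i)=\frac{137}{60}-5+\frac{5}{4}=-\frac{22}{15}$ and $\operatorname{Re}P_6(i)=\frac{147}{60}-\frac{15}{2}+\frac{15}{4}-\frac{1}{6}=-\frac{22}{15}$. Your fallback $|\phi_p(\theta)|\le\pi/(2\alpha)$ cannot close this uniformly in $\alpha\in(0,1)$ either: at a fixed $\theta_0$ the sum $\sum_{\ell}\varpi_{p,\ell}^{(\alpha)}\cos(\ell\theta_0)=\rho_p(\theta_0)^\alpha\cos\bigl(\alpha\,\phi_p(\theta_0)\bigr)$ converges to $\operatorname{Re}P_p(e^{i\theta_0})$ as $\alpha\to1^-$, which is negative at the points above, so the asserted inequality itself fails for $p=3,5,6$ once $\alpha$ is sufficiently close to $1$. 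The most your method can deliver for those $p$ is a restricted range $0<\alpha\le\pi/\bigl(2\max_\theta|\phi_p(\theta)|\bigr)$, exactly the shape of the paper's own Theorem \ref{th.3} for $p=4$ (numerically, the threshold for $p=3$ is about $0.96$). To be fair, the paper proves only $p=2$ and dismisses the remaining cases as ``almost similar,'' so your proposal mirrors the paper faithfully --- but neither your plan nor the paper's sketch can be completed as stated for $p=3,5,6$; only the $p=2$ case, and restricted-$\alpha$ versions of the others, are actually provable along these lines.
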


\begin{proof}  We only prove $p=2$, the other cases can be almost similarly shown so are left out here. Let
$$f_1(\alpha,\theta)=\sum\limits_{\ell=0}^{\infty}\varpi_{2,\ell}^{(\alpha)}\cos(\ell\theta),$$
which can be expanded as

$$
\begin{array}{lll}
\displaystyle
f_1(\alpha,\theta)=\sum\limits_{\ell=0}^{\infty}\varpi_{2,\ell}^{(\alpha)}\cos(\ell\theta)
=\frac{1}{2}\sum\limits_{\ell=0}^{\infty}\varpi_{2,\ell}^{(\alpha)}\left(\exp(i\ell\theta)+\exp(-i\ell\theta)
\right)\\
\displaystyle =\frac{1}{2}\left[\left(1-\exp(i\theta)\right)^\alpha
\left(\frac{3}{2}-\frac{1}{2}\exp(i\theta)\right)^{\alpha}
+\left(1-\exp(-i\theta)\right)^\alpha
\left(\frac{3}{2}-\frac{1}{2}\exp(-i\theta)\right)^{\alpha}\right].
\end{array}
$$
Note that $f_1(\alpha,\theta)$ is a real-value and even function, so
we need only consider $\theta\in[0,\pi]$.

 Using the following equations
$$
\begin{array}{lll}
\displaystyle \left(1-\exp(\pm
i\theta)\right)^\alpha=\left(2\sin\frac{\theta}{2}\right)^\alpha\exp\left(\pm
i\alpha\left(\frac{\theta-\pi}{2}\right)\right)
\end{array}
$$
and
$$
\begin{array}{lll}
\displaystyle \left(x-yi\right)^\alpha=\left(x^2+y^2\right)
^{\frac{\alpha}{2}}\exp\left( i\alpha \phi\right),\;\;\phi=-\arctan\frac{y}{x},
\end{array}
$$
we can rewrite $f_1(\alpha,\theta)$ as
$$
\begin{array}{lll}
\displaystyle
f_1(\alpha,\theta)=\left(2\sin\frac{\theta}{2}\right)^\alpha\left(\lambda_1^2(\theta)+\mu_1^2(\theta)\right)
^{\frac{\alpha}{2}}\cos\alpha\left(\frac{\theta-\pi}{2}+\phi_1\right),
\end{array}
$$
where
$$
\begin{array}{lll}
\displaystyle
\lambda_1(\theta)=3-\cos\theta,\;\;\;\mu_1(\theta)=\sin\theta,\;\;\phi_1=-\arctan\frac{\mu_1(\theta)}{\lambda_1(\theta)}.
\end{array}
$$

Let
$$
\begin{array}{lll}
\displaystyle
z(\theta)=\frac{\theta-\pi}{2}+\phi_1,\;\;0\leq\theta\leq\pi.
\end{array}
$$
Then
$$
\begin{array}{lll}
\displaystyle z'(\theta)=\left(\frac{\theta-\pi}{2}+\phi_1\right)'
=\frac{3\sin^2\left(\frac{\theta}{2}\right)}{1+3\sin^2\left(\frac{\theta}{2}\right)}\geq0.
\end{array}
$$
Hence $z(\theta)$ is an increasing function in $[0,\pi]$ and
$$
\begin{array}{lll}
\displaystyle z_{\min}(\theta)=
z(0)=-\frac{\pi}{2},\,\;z_{\max}(\theta)=z(\pi)=0.
\end{array}
$$
Sot $\alpha\in(0,1)$ and $\theta\in[0,\pi]$ imply
$\cos\alpha\left(\frac{\theta-\pi}{2}+\phi_1\right)\geq0$. Furthermore one has
$$
\begin{array}{lll}
\displaystyle
f_1(\alpha,\theta)=\left(2\sin\frac{\theta}{2}\right)^\alpha\left(\lambda_1^2(\theta)+\mu_1^2(\theta)\right)
^{\frac{\alpha}{2}}\cos\alpha\left(\frac{\theta-\pi}{2}+\phi_1\right)\geq0.
\end{array}
$$
All this ends the proof.
\end{proof}

For $p=4$, $\alpha$ can not attain to $1$. But we have following theorem.

\begin{theorem}\label{th.3} If $\displaystyle 0<\alpha\leq\frac{\pi}
{\pi-\arccos\frac{1}{5}+2\arctan\frac{191\sqrt{6}}{317}}\approx 0.8439,$ then
the following inequality holds:
$$
\begin{array}{lll}
\displaystyle
\sum\limits_{\ell=0}^{\infty}\varpi_{4,\ell}^{(\alpha)}\cos(\ell\theta)\geq0,\;\theta\in[-\pi,\pi].
\end{array}
$$
\end{theorem}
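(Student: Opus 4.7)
I would imitate the argument for Theorem \ref{th.2}, the key algebraic fact being that the degree-four polynomial inside $W_4^{(\alpha)}(z)$ has $z=1$ as an explicit root, yielding the factorisation
\[
\tfrac{25}{12}-4z+3z^{2}-\tfrac{4}{3}z^{3}+\tfrac{1}{4}z^{4}=(1-z)R(z),\qquad R(z):=\tfrac{25}{12}-\tfrac{23}{12}z+\tfrac{13}{12}z^{2}-\tfrac{1}{4}z^{3},
\]
so that $W_4^{(\alpha)}(z)=(1-z)^{\alpha}R(z)^{\alpha}$. Writing $R(e^{i\theta})=\lambda_4(\theta)-i\mu_4(\theta)$ with
\[
\lambda_4(\theta)=\tfrac{25}{12}-\tfrac{23}{12}\cos\theta+\tfrac{13}{12}\cos 2\theta-\tfrac{1}{4}\cos 3\theta,\quad \mu_4(\theta)=\tfrac{23}{12}\sin\theta-\tfrac{13}{12}\sin 2\theta+\tfrac{1}{4}\sin 3\theta,
\]
and reusing the polar-form identities from the proof of Theorem \ref{th.2}, I would arrive at
\[
f_4(\alpha,\theta):=\sum_{\ell=0}^{\infty}\varpi_{4,\ell}^{(\alpha)}\cos(\ell\theta)=\bigl(2\sin\tfrac{\theta}{2}\bigr)^{\alpha}\bigl(\lambda_4^{2}(\theta)+\mu_4^{2}(\theta)\bigr)^{\alpha/2}\cos\bigl(\alpha\,z(\theta)\bigr),
\]
where $z(\theta):=\tfrac{\theta-\pi}{2}+\phi_4(\theta)$ and $\phi_4(\theta):=-\arctan(\mu_4(\theta)/\lambda_4(\theta))$. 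Since $f_4$ is even in $\theta$ and the prefactor is nonnegative, the task reduces to enforcing $\alpha\,z(\theta)\in[-\pi/2,\pi/2]$ for every $\theta\in[0,\pi]$.

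At the endpoints one has $R(1)=1>0$ and $R(-1)=16/3>0$, so $\phi_4(0)=\phi_4(\pi)=0$, giving $z(0)=-\pi/2$ and $z(\pi)=0$. Unlike the $p=2$ situation, $z$ is not monotone and can dip strictly below $-\pi/2$ inside $(0,\pi)$, which forces $\alpha<1$ and is the whole reason for an explicit bound. I would locate the interior extrema through
\[
z'(\theta)=\frac{1}{2}-\frac{\mu_4'(\theta)\lambda_4(\theta)-\mu_4(\theta)\lambda_4'(\theta)}{\lambda_4^{2}(\theta)+\mu_4^{2}(\theta)}=0,
\]
that is, $2(\mu_4'\lambda_4-\mu_4\lambda_4')=\lambda_4^{2}+\mu_4^{2}$. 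Expanding both sides by the Chebyshev identities and writing $c:=\cos\theta$, I expect this equation to factor as $(5c-1)\cdot G(c)=0$ with $G$ having no further root in $(-1,1)$ producing a smaller value of $z$, so the unique interior minimiser is $\theta^{*}=\arccos(1/5)$.

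A direct substitution of $\cos\theta^{*}=1/5$, $\sin\theta^{*}=2\sqrt{6}/5$ and the standard double- and triple-angle values yields $\lambda_4(\theta^{*})=317/375$ and $\mu_4(\theta^{*})=191\sqrt{6}/375$, so $\phi_4(\theta^{*})=-\arctan(191\sqrt{6}/317)$ and
\[
\min_{\theta\in[0,\pi]}z(\theta)=z(\theta^{*})=-\tfrac{1}{2}\bigl(\pi-\arccos\tfrac{1}{5}\bigr)-\arctan\tfrac{191\sqrt{6}}{317}.
\]
Requiring $\alpha\,|z(\theta^{*})|\le\pi/2$ is precisely the stated bound on $\alpha$, under which $\cos(\alpha\,z(\theta))\ge 0$ throughout $[0,\pi]$ and therefore $f_4(\alpha,\theta)\ge 0$.

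The main obstacle is verifying that $\theta^{*}=\arccos(1/5)$ is the unique interior minimiser of $z$: the critical-point condition reduces to a polynomial identity of bounded degree in $c$, and one must both pull out the factor $5c-1$ and rule out spurious critical points via a sign or monotonicity analysis of the residual factor $G$. Every other step---the polar decomposition, the endpoint identifications, and the exact evaluation at $\theta^{*}$---is a mechanical echo of the proof of Theorem \ref{th.2}.
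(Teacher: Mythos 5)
Your proposal follows essentially the same route as the paper's proof: the same polar decomposition $f_4(\alpha,\theta)=\bigl(2\sin\tfrac{\theta}{2}\bigr)^{\alpha}\bigl(\lambda^{2}+\mu^{2}\bigr)^{\alpha/2}\cos\alpha\bigl(\tfrac{\theta-\pi}{2}+\phi\bigr)$ (the paper writes $\lambda,\mu$ scaled by $12$, which is immaterial for the sign), the same reduction to bounding $\alpha\,|z(\theta)|$ by $\pi/2$, the same critical point $\theta^{*}=\arccos\tfrac15$, and the same exact values $317$ and $191\sqrt{6}$ leading to the stated bound. The single step you flag as the main obstacle resolves exactly as you anticipate: with $y(\theta)=\pi-\theta-2\phi=-2z(\theta)$ the paper computes $y'(\theta)=\dfrac{1920\,(5\cos\theta-1)\sin^{4}\!\left(\tfrac{\theta}{2}\right)}{\lambda^{2}(\theta)+\mu^{2}(\theta)}$, so the residual factor is just $\sin^{4}(\theta/2)$ (nonnegative, vanishing only at $\theta=0$) and there are no spurious critical points to rule out.
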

\begin{proof} Let
$f_2(\alpha,\theta)=\sum\limits_{\ell=0}^{\infty}\varpi_{4,\ell}^{(\alpha)}\cos(\ell\theta).$
By almost the same reasoning as that of Theorem \ref{th.2}, we can get
$$
\begin{array}{lll}
\displaystyle
f_2(\alpha,\theta)=\left(2\sin\frac{\theta}{2}\right)^\alpha\left(\lambda^2_2(\theta)+\mu^2_2(\theta)\right)
^{\frac{\alpha}{2}}\cos\alpha\left(\frac{\theta-\pi}{2}+\phi_2\right),
\end{array}
$$
where
$$
\begin{array}{lll}
\displaystyle \;\;\;\lambda_2(\theta)
=25-23\cos\theta+13\cos2\theta-3\cos3\theta,\\ \displaystyle
\mu_2(\theta)=23\sin\theta-13\sin2\theta+3\sin3\theta,\;\;\phi_2=-\arctan\frac{\mu_2(\theta)}{\lambda_2(\theta)}.
\end{array}
$$
Since
$$\lambda_2(\theta)=14\left(\cos(\theta)-\frac{1}{2}\right)^2
+24\cos^2(\theta)\sin^2\left(\frac{\theta}{2}\right)+\frac{17}{2}>0,$$
and
$$\mu_2(\theta)=\left[12\left(\cos(\theta)-\frac{13}{12}\right)^2+\frac{71}{24}\right]\sin(\theta)\geq0,$$
so $\phi_2\in[-\frac{\pi}{2},0]$. We need only consider
$0\leq\theta\leq\pi $, therefore
$-\pi\leq\alpha\left(\frac{\theta-\pi}{2}+\phi_2\right)\leq0$.

Obviously, if
$\cos\alpha\left(\frac{\theta-\pi}{2}+\phi_2\right)\geq0$, then
$f_2(\alpha,\theta)\geq0$. A sufficient condition for
$\cos\alpha\left(\frac{\theta-\pi}{2}+\phi_2\right)\geq0$ is
$$
\begin{array}{lll}
\displaystyle
-\frac{\pi}{2}\leq\min_{\theta\in[0,\pi]}\alpha\left(\frac{\theta-\pi}{2}+\phi_2\right)\leq0,
\end{array}
$$
i.e.,
$$
\begin{array}{lll}
\displaystyle 0<\alpha\leq
\min_{\theta\in[0,\pi]}\left\{\frac{\pi}{\pi-\theta-2\phi_2}\right\}.
\end{array}
$$

Let
$
\displaystyle y(\theta)=\pi-\theta-2\phi_2$, then
$$
\begin{array}{lll}
\displaystyle y'(\theta)=\frac{1920
\left(5\cos\theta-1\right)\sin^4\left(\frac{\theta}{2}\right)}{a_2^2(\theta)+b_2^2(\theta)}.
\end{array}
$$
It is clear that $\theta=\arccos\frac{1}{5}$ is a unique maximum
point of $y(\theta)$ when $\theta\in[0,\pi]$, i.e.,
$$
\begin{array}{lll}
\displaystyle
y_{\max}\left(\theta\right)=y_{\max}\left(\arccos\frac{1}{5}\right)
=\pi-\arccos\frac{1}{5}+2\arctan\frac{191\sqrt{6}}{317},
\end{array}
$$
it follows that
$$
\begin{array}{lll}
\displaystyle
\min_{\theta\in[0,\pi]}\left\{\frac{\pi}{\pi-\theta-2\phi_2}\right\}=
\frac{\pi}
{\pi-\arccos\frac{1}{5}+2\arctan\frac{191\sqrt{6}}{317}}\approx0.8439,
\end{array}
$$
i.e.,
$$
\begin{array}{lll}
\displaystyle 0<\alpha\leq0.8439.
\end{array}
$$
 This finishes the proof.~\end{proof}
%
Theorems \ref{th.2} and \ref{th.3} are very suitable for numerically analyzing Riemann-Liouville
 space fractional partial differential equations. But for numerically analyzing Riemann-Liouville time
 fractional partial differential equations, the monotonicity of the coefficients $\varpi_{p,\ell}^{(\alpha)}$ $(p=1,2,3,4,5,6)$
 is often necessary. As far as we know, only the monotonicity of the coefficients $\varpi_{1,\ell}^{(\alpha)}$ for $\alpha\in(0,1)$
 and $\alpha\in(1,2)$, $\varpi_{2,\ell}^{(\alpha)}$ for $\alpha\in(0,1)$
 are available \cite{liding3}. In the following, we study the rest cases.

{
\begin{theorem}\label{th.1} The second-order coefficients
$\varpi_{2,\ell}^{(\alpha)}\;(\ell=0,1,\ldots)$
satisfy
$$
\begin{array}{lll}
\displaystyle
(1)~\varpi_{2,0}^{(\alpha)}=\left(\frac{3}{2}\right)^{\alpha}>0,\;
\;
\varpi_{2,1}^{(\alpha)}=-\frac{4\alpha}{3}\left(\frac{3}{2}\right)^{\alpha}<0,\;\vspace{0.2 cm}\\
\displaystyle \hspace{0.6cm}
\varpi_{2,2}^{(\alpha)}=\frac{\alpha\left(8\alpha-3\right)}{9}\left(\frac{3}{2}\right)^{\alpha}
,\;\;\varpi_{2,3}^{(\alpha)}=-\frac{4\alpha\left(\alpha-1\right)\left(8\alpha-7\right)}
{81}\left(\frac{3}{2}\right)^{\alpha},\vspace{0.2 cm}\\
\displaystyle \hspace{0.6cm}
\varpi_{2,4}^{(\alpha)}=\frac{\alpha\left(\alpha-1\right)\left(64\alpha^2-176\alpha+123\right)}
{486}\left(\frac{3}{2}\right)^{\alpha}, \\ \hspace{3cm}\vdots
\\\hspace{3cm} \\
 (2)~{\textit{When}}~~0<\alpha<1,~\varpi_{2,\ell}^{(\alpha)}<0 \;\;\textit{and}\;\;\varpi_{2,\ell}^
 {(\alpha)}<\varpi_{2,\ell+1}^{(\alpha)} \;\;hold\;\;\textit{for}\;\;\ell\geq4,
\vspace{0.2 cm}\\
\displaystyle
(3)~{\textit{When}}~~1<\alpha<2,~\varpi_{2,\ell}^{(\alpha)}>0 \;\;\textit{and}\;\;\varpi_{2,\ell}^{(\alpha)}
>\varpi_{2,\ell+1}^{(\alpha)}\;\;hold\;\;\textit{for}\;\;\ell\geq5.
\end{array}
$$
\end{theorem}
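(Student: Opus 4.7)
For part (1) I would exploit the factorization $W_2^{(\alpha)}(z)=(3/2)^{\alpha}(1-z)^{\alpha}(1-z/3)^{\alpha}$ together with the convolution formula already recorded above, $\varpi_{2,\ell}^{(\alpha)}=(3/2)^{\alpha}\sum_{k=0}^{\ell}3^{-k}\varpi_{1,k}^{(\alpha)}\varpi_{1,\ell-k}^{(\alpha)}$, where $\varpi_{1,k}^{(\alpha)}=(-1)^{k}\binom{\alpha}{k}$. Substituting $\ell=0,1,\ldots,4$ and simplifying term by term produces the five closed-form expressions claimed; this is routine bookkeeping.

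For parts (2) and (3) the main tool will be a three-term recurrence. Writing $W_2^{(\alpha)}(z)=Q(z)^{\alpha}$ with $Q(z)=\tfrac{3}{2}-2z+\tfrac{1}{2}z^{2}$, the functional identity $Q(z)\,W_2^{(\alpha)\prime}(z)=\alpha Q'(z)\,W_2^{(\alpha)}(z)$ translates, upon matching the coefficient of $z^{\ell}$, into
\[
3(\ell+1)\,\varpi_{2,\ell+1}^{(\alpha)} = 4(\ell-\alpha)\,\varpi_{2,\ell}^{(\alpha)} - (\ell-1-2\alpha)\,\varpi_{2,\ell-1}^{(\alpha)},\qquad \ell\geq 1.
\]
For part (2) I would fix $\alpha\in(0,1)$ and note that for $\ell\geq 4$ both $4(\ell-\alpha)>0$ and $\ell-1-2\alpha>0$. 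The part (1) formula already gives $\varpi_{2,4}^{(\alpha)}<0$ directly, since $\alpha(\alpha-1)<0$ and $64\alpha^{2}-176\alpha+123$ has negative discriminant. One further application of the recurrence produces $\varpi_{2,5}^{(\alpha)}$; routine sign analysis of the resulting polynomial in $\alpha$ yields the remaining base-case facts $\varpi_{2,5}^{(\alpha)}<0$ and $\varpi_{2,4}^{(\alpha)}<\varpi_{2,5}^{(\alpha)}$. Strong induction on $\ell$ then closes the argument. Part (3) is entirely parallel: for $\alpha\in(1,2)$ and $\ell\geq 5$ we still have $4(\ell-\alpha)>0$ and $\ell-1-2\alpha>0$ (the shift from $\ell\geq 4$ to $\ell\geq 5$ is precisely what is needed to keep the second coefficient positive); verify the two base cases at $\ell=5,6$ from part (1) plus the recurrence and induct with all signs reversed.

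The heart of the argument is the inductive step, where I must rule out the possibility that the positive contribution in the recurrence swamps the negative one. From the monotonicity hypothesis $\varpi_{2,\ell-1}^{(\alpha)}<\varpi_{2,\ell}^{(\alpha)}<0$ alone, one only has $|\varpi_{2,\ell}^{(\alpha)}|/|\varpi_{2,\ell-1}^{(\alpha)}|\leq 1$, whereas deducing $\varpi_{2,\ell+1}^{(\alpha)}<0$ requires the strict lower bound
\[
\frac{|\varpi_{2,\ell}^{(\alpha)}|}{|\varpi_{2,\ell-1}^{(\alpha)}|} \;>\; \frac{\ell-1-2\alpha}{4(\ell-\alpha)},
\]
whose right-hand side tends to $1/4$ as $\ell\to\infty$. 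My plan is to carry this ratio estimate as an auxiliary inductive invariant alongside the sign and monotonicity claims and verify that it is preserved by the recurrence. A cleaner but less elementary alternative is singularity analysis at the dominant singularity $z=1$, where $W_2^{(\alpha)}(z)\sim(1-z)^{\alpha}$ gives $\varpi_{2,\ell}^{(\alpha)}\sim \ell^{-\alpha-1}/\Gamma(-\alpha)$ with effective remainders, forcing the ratio to lie close to $1$ for every $\ell$ past the small threshold already handled by the base cases.
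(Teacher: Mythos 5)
Your route is genuinely different from the paper's and, with one step completed, it works. The paper handles part (2) by citation to [12] and proves part (3) by splitting the convolution sum $\varpi_{2,\ell}^{(\alpha)}=\left(\frac{3}{2}\right)^{\alpha}\sum_{m}3^{-m}\varpi_{1,m}^{(\alpha)}\varpi_{1,\ell-m}^{(\alpha)}$ into boundary and interior pieces and then establishing positivity of a cascade of auxiliary polynomials ($F,G,P_1,\dots,P_4,Q,R$) via convexity in $\ell$; your three-term recurrence $3(\ell+1)\varpi_{2,\ell+1}^{(\alpha)}=4(\ell-\alpha)\varpi_{2,\ell}^{(\alpha)}-(\ell-1-2\alpha)\varpi_{2,\ell-1}^{(\alpha)}$ (which I checked is correctly derived from $QW'=\alpha Q'W$) treats (2) and (3) uniformly and is considerably cleaner. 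You correctly identify the crux — a lower bound on $r_\ell:=|\varpi_{2,\ell}^{(\alpha)}|/|\varpi_{2,\ell-1}^{(\alpha)}|$ — but you leave it as a ``plan''; for the record, it does close: the invariant $\frac{1}{3}\le r_\ell\le 1$ is preserved, since $r_{\ell+1}=\frac{4(\ell-\alpha)-(\ell-1-2\alpha)/r_\ell}{3(\ell+1)}$ satisfies $r_{\ell+1}\ge\frac{4(\ell-\alpha)-3(\ell-1-2\alpha)}{3(\ell+1)}=\frac{\ell+2\alpha+3}{3(\ell+1)}>\frac{1}{3}$ and $r_{\ell+1}\le\frac{4(\ell-\alpha)-(\ell-1-2\alpha)}{3(\ell+1)}=1-\frac{2\alpha+2}{3(\ell+1)}<1$, while $r_\ell\ge\frac{1}{3}>\frac{\ell-1-2\alpha}{4(\ell-\alpha)}$ guarantees the sign is inherited; the same computation covers $\alpha\in(1,2)$ with all signs flipped. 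What remains, and what you should not understate, is the base-case work: besides $\varpi_{2,4}^{(\alpha)}<0<\varpi_{2,5}^{(\alpha)}-\varpi_{2,4}^{(\alpha)}$ (resp.\ the $\ell=5,6$ facts for $\alpha\in(1,2)$) you must also verify the ratio bound $r_{\ell_0}\ge\frac{1}{3}$ at the starting index, which is an explicit polynomial inequality in $\alpha$ of moderate degree that you have not listed. Net assessment: your approach buys a short, structurally transparent induction in place of the paper's lengthy term-by-term estimates, at the cost of a couple of extra explicit polynomial sign checks at the base.
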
}
\begin{proof}
See the Appendix A.
\end{proof}


Although it is not facile to prove the monotonicity of the coefficients
$\varpi_{p,\ell}^{(\alpha)},\;p=3,\cdots,6,$ we can explicitly
write their expressions, see the beginning part of section 2 for more details.
These coefficients are explicitly expressed which are beneficial for numerical
 calculations. Besides, through a huge of numerical simulations, one can find the
  monotonicity of the coefficients $\varpi_{p,\ell}^{(\alpha)},\;p=3,\cdots,5.$

Here, we only list very limited figures to show the monotonicity of the coefficients for $p=3,4,5$.
Figs. \ref{fig.1} and \ref{fig.2} show the monotonicity of the coefficients $\varpi_{3,\ell}^{(\alpha)}$ for $\alpha\in(0,1)$ and $\alpha\in(1,2)$.
Figs. \ref{fig.3} and \ref{fig.4} display the monotonicity of the coefficients $\varpi_{4,\ell}^{(\alpha)}$ for $\alpha\in(0,1)$ and $\alpha\in(1,2)$.
Figs. \ref{fig.5} and \ref{fig.6} present the monotonicity of the coefficients $\varpi_{5,\ell}^{(\alpha)}$ for $\alpha\in(0,1)$ and $\alpha\in(1,2)$. But through the numerical simulations, $\varpi_{6,\ell}^{(\alpha)}, \; \alpha\in(0,1)$ and $\alpha\in(1,2)$ seem not to have the monotonicity.

\begin{figure}[htbp]
\includegraphics[width=4.6in]{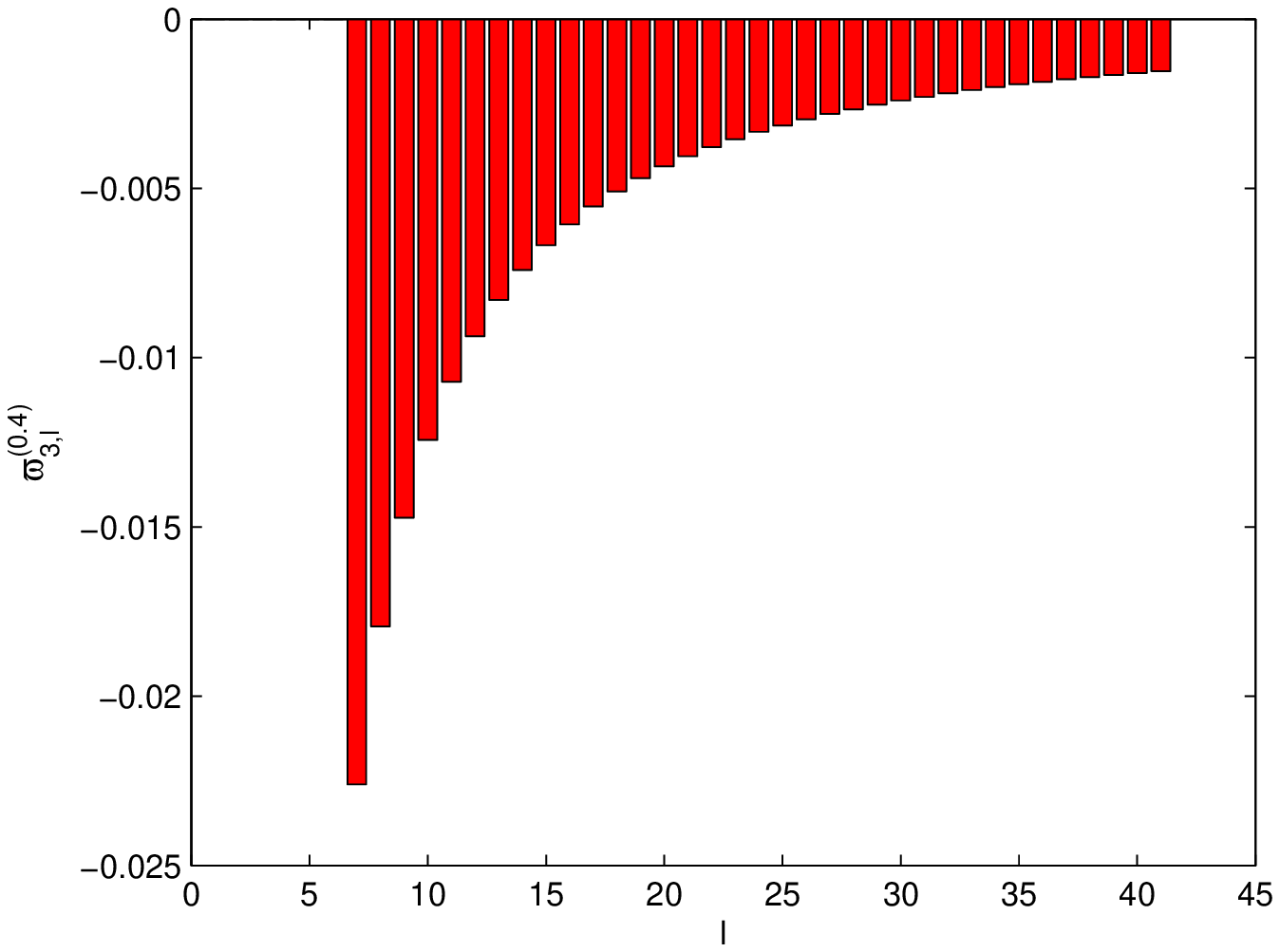}
\caption{The values of coefficient $\displaystyle\varpi_{3,\ell}^{(\alpha)}$
$(\ell=4,5,\cdots)$ for $\alpha=0.4$.}
\label{fig.1}
\end{figure}

\begin{figure}[htbp]
\includegraphics[width=4.6in]{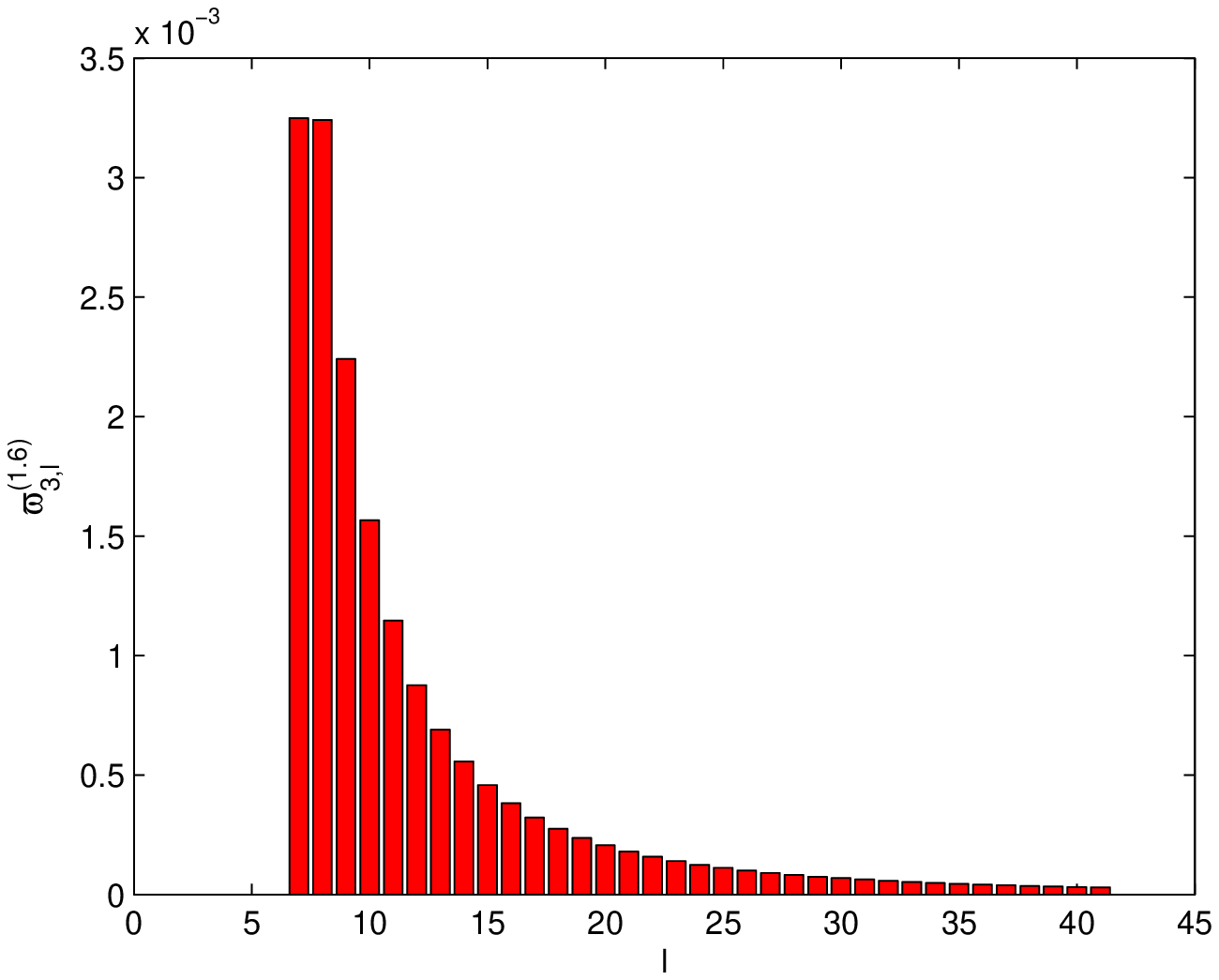}
\caption{The values of coefficient $\displaystyle\varpi_{3,\ell}^{(\alpha)}$
$(\ell=7,8,\cdots)$ for $\alpha=1.6$.}
\label{fig.2}
\end{figure}

\begin{figure}[htbp]
\includegraphics[width=4.6in]{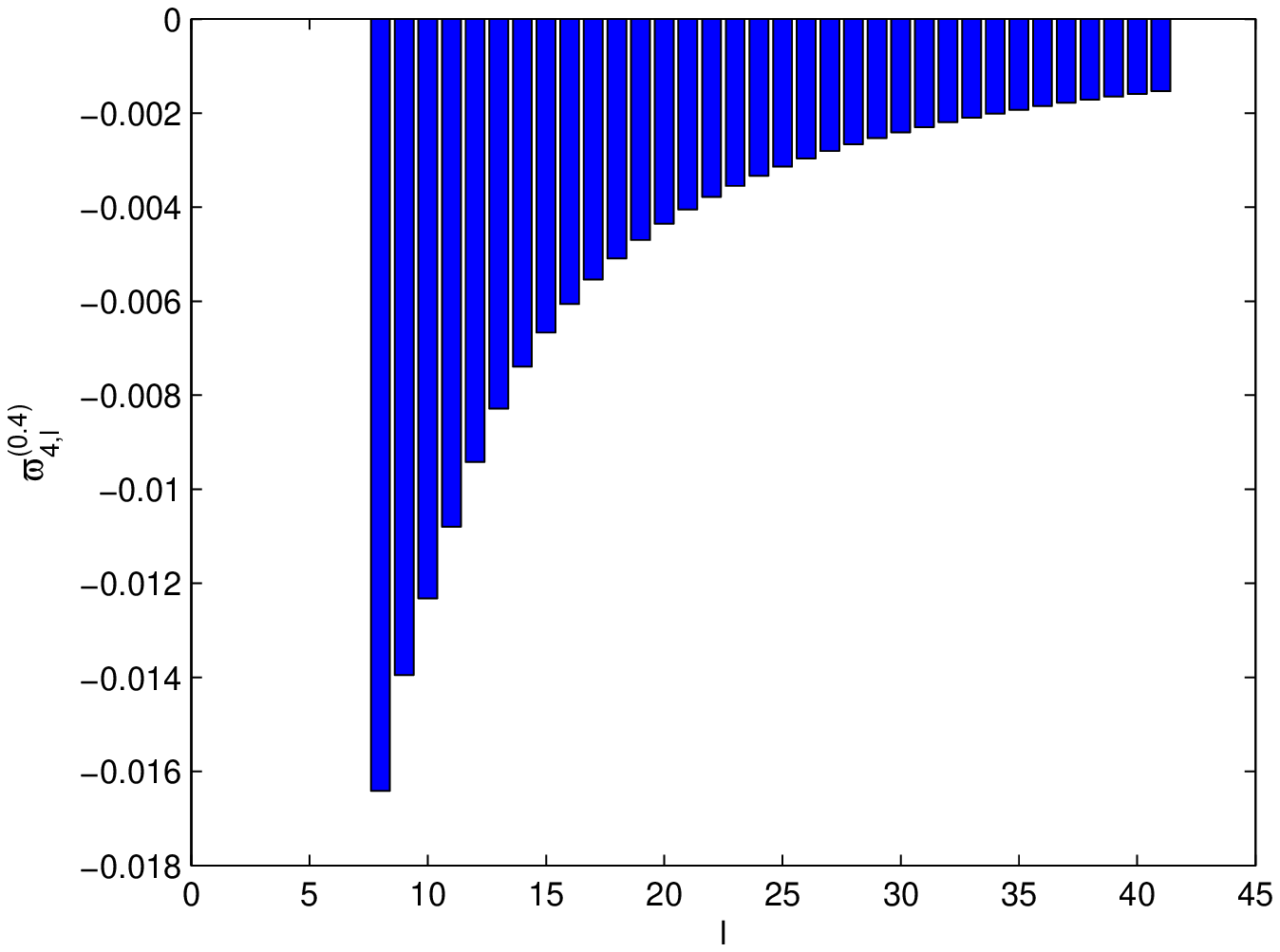}
\caption{The values of coefficient $\displaystyle\varpi_{4,\ell}^{(\alpha)}$
$(\ell=7,8,\cdots)$ for $\alpha=0.4$.}
\label{fig.3}
\end{figure}

\begin{figure}[htbp]
\includegraphics[width=4.6in]{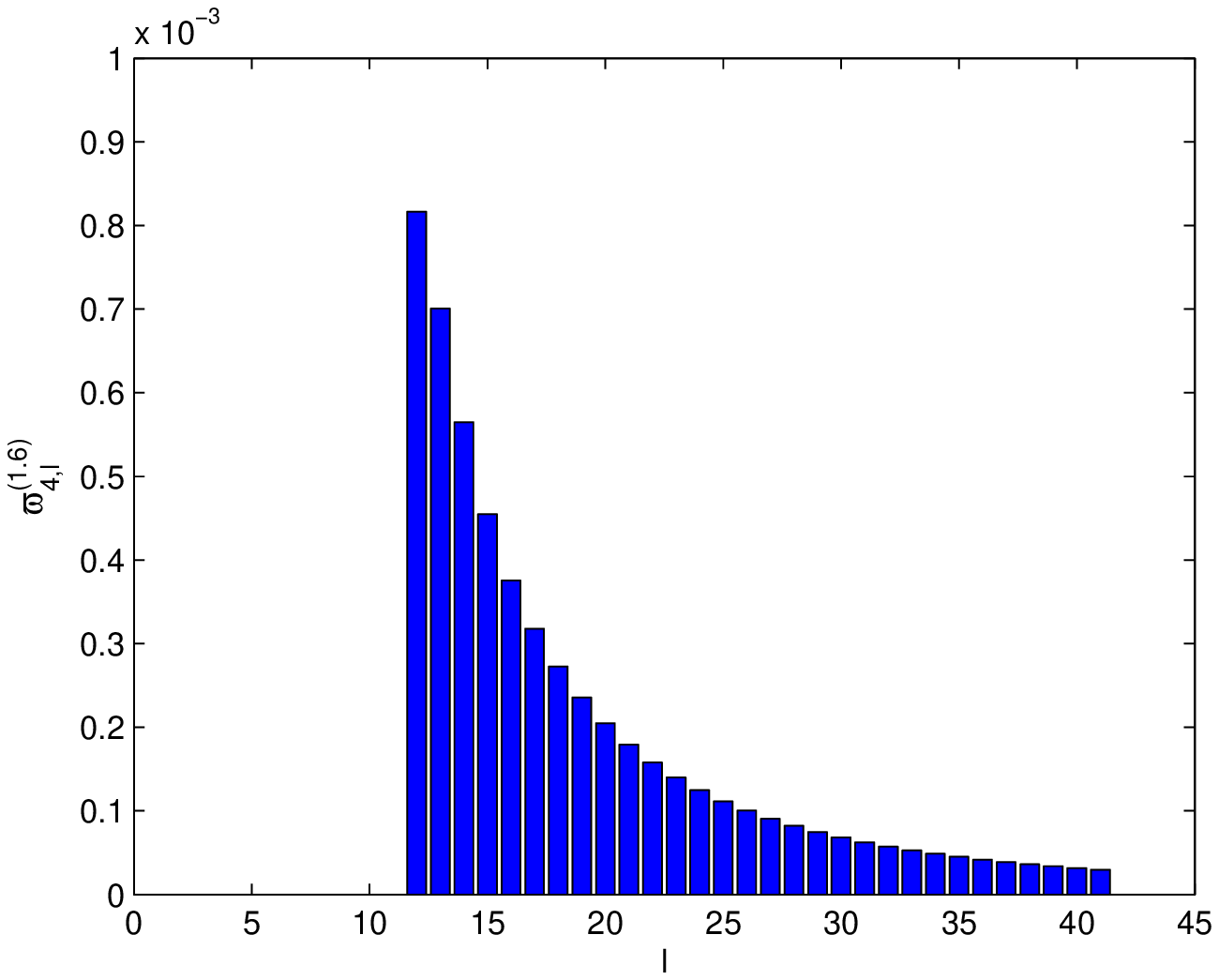}
\caption{The values of coefficient $\displaystyle\varpi_{4,\ell}^{(\alpha)}$
$(\ell=10,11,\cdots)$ for $\alpha=1.6$.}
\label{fig.4}
\end{figure}

\begin{figure}[htbp]
\includegraphics[width=4.6in]{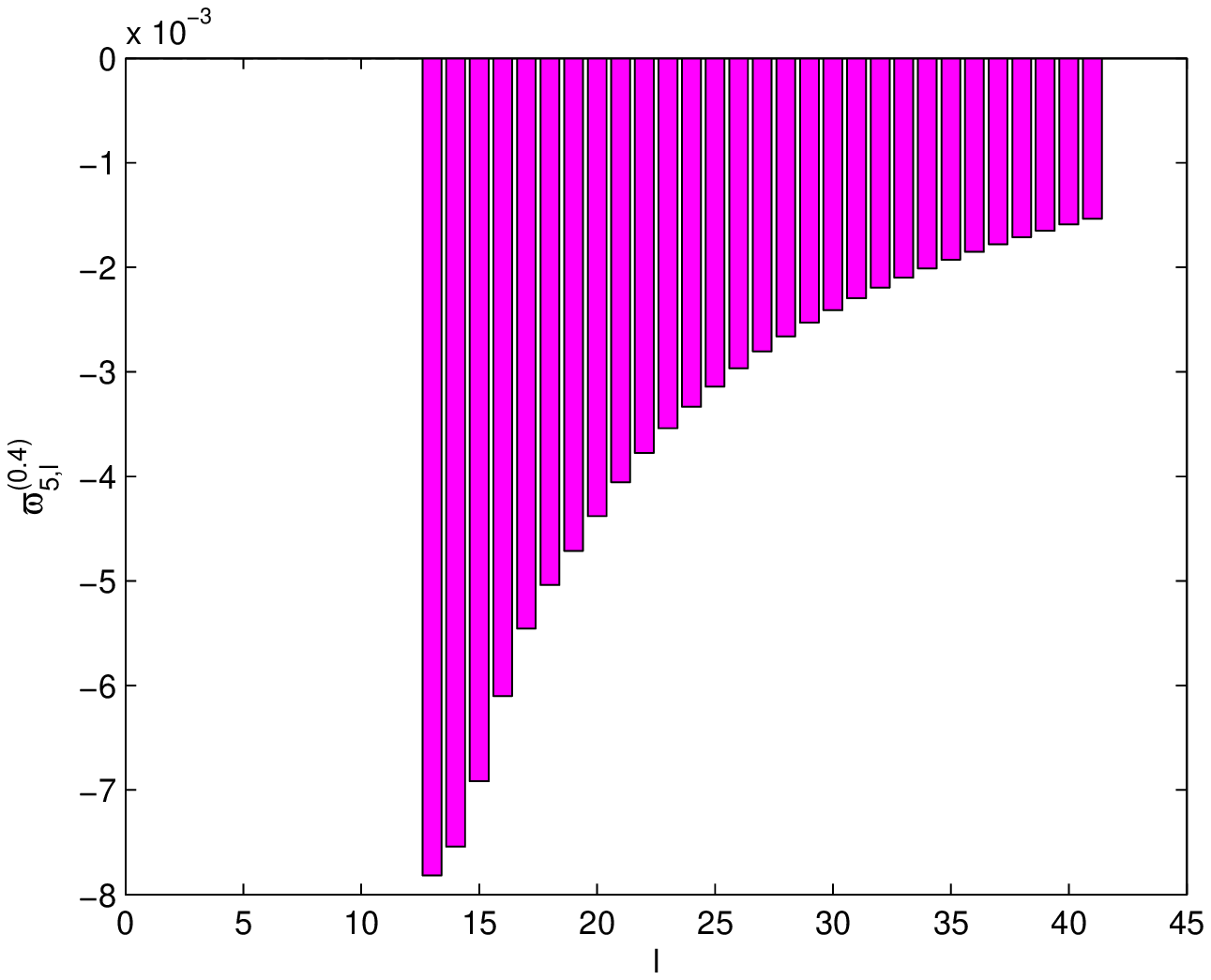}
\caption{The values of coefficient $\displaystyle\varpi_{5,\ell}^{(\alpha)}$
$(\ell=12,13,\cdots)$ for $\alpha=0.4$.}
\label{fig.5}
\end{figure}

\begin{figure}[htbp]
\includegraphics[width=4.6in]{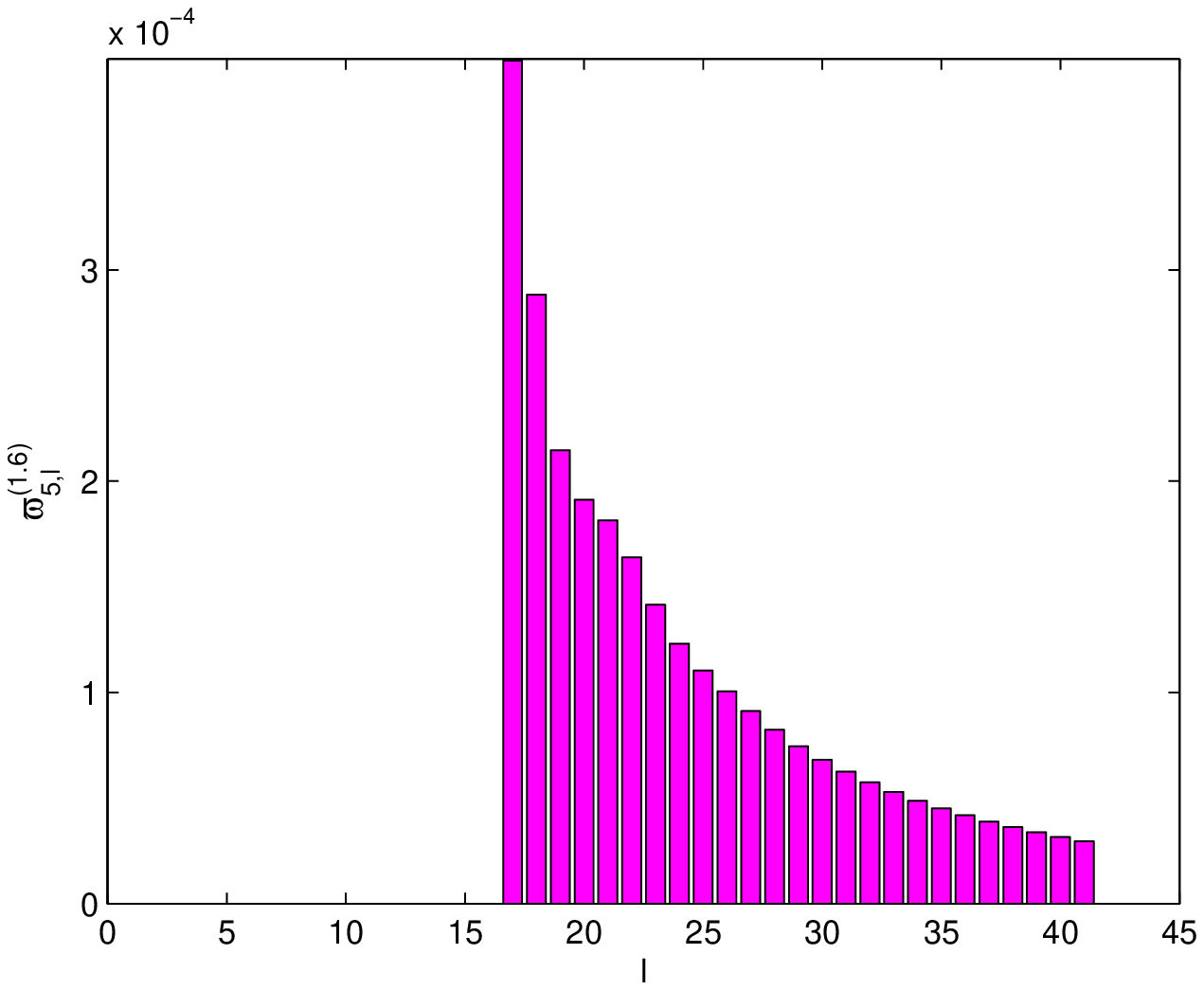}
\caption{The values of coefficient $\displaystyle\varpi_{5,\ell}^{(\alpha)}$
$(\ell=16,17,\cdots)$ for $\alpha=1.6$.}
\label{fig.6}
\end{figure}

%

{In the following, we give the conjecture which is seemingly primary but hard to prove.\\
{\it{\bf Conjecture }
$$
\begin{array}{lll}
\displaystyle
(1)\;\;
\textrm{If}\;\;\displaystyle~0<\alpha<1,\;\;
then\;\;\varpi_{3,\ell}^{(\alpha)}\leq\varpi_{3,\ell+1}^{(\alpha)}\;\;holds
\;\;
for\;\;\ell\geq4,\;\;\varpi_{4,\ell}^{(\alpha)}\leq\varpi_{4,\ell+1}^{(\alpha)}\;\;holds\;\;\vspace{0.2 cm}\\\hspace{0.7cm}
for\;\;\ell\geq7,\;\;and \;\;\varpi_{5,\ell}^{(\alpha)}\leq\varpi_{5,\ell+1}^{(\alpha)}
\;\;
for\;\;\ell\geq12.
\vspace{0.2 cm}\\
(2)\;\;\textrm{If} \;\;\displaystyle~1<\alpha<2,\;\;then\;\;
\displaystyle\varpi_{3,\ell}^{(\alpha)}\geq\varpi_{3,\ell+1}^{(\alpha)}\;\;holds\;\;
for\;\;\ell\geq7,\;\;\varpi_{4,\ell}^{(\alpha)}\geq\varpi_{4,\ell+1}^{(\alpha)}\;\;holds\;\;\vspace{0.2 cm}\\\hspace{0.7cm}
for\;\;\ell\geq12,\;\;and \;\;\varpi_{5,\ell}^{(\alpha)}\geq\varpi_{5,\ell+1}^{(\alpha)}\;\;
for\;\;\ell\geq16.
\end{array}
$$}}

Besides their monotonicity, studying bounds of these coefficients is also of importance, which can be used to analyze the
stability and convergence for time fractional differential equations. In \cite{dim},
Dimitrov given the bounds for first-order $\varpi_{1,\ell}^{(\alpha)},(0<\alpha<1)$ below:

\begin{theorem}\label{th.4} The first-order coefficients
$\varpi_{1,\ell}^{(\alpha)},\;(0<\alpha<1)$ satisfy
$$
\begin{array}{lll}
\displaystyle
(1)\;\;\;\widetilde{B}_{1}^{L}{(\alpha,\ell)}
<\left|\varpi_{1,\ell}^{(\alpha)}\right|<B_{1}^{R}{(\alpha,\ell)},\;\;\;
\ell\geq3,
\vspace{0.2 cm}\\\displaystyle\;\;where\;\;\widetilde{B}_{1}^{L}{(\alpha,\ell)}
=\exp\left(-(\alpha+1)^2\left(\frac{\pi^2}{6}-\frac{5}{4}\right)\right)\frac{\alpha(1-\alpha)2^{\alpha}}{\ell^{\alpha+1}},\;
B_{1}^{R}{(\alpha,\ell)}=\frac{\alpha2^{\alpha+1}}{(\ell+1)^{\alpha+1}},\vspace{0.2 cm}\\
\displaystyle(2)\;\;\; \widetilde{S}_{1}^{L}{(\alpha,\ell)}
<\sum\limits_{k=\ell}^{\infty}\left|\varpi_{1,k}^{(\alpha)}\right|<S_{1}^{R}{(\alpha,\ell)},\;\;\ell\geq3,
\vspace{0.2 cm}\\ \displaystyle\;\;where\;\;\widetilde{S}_{1}^{L}{(\alpha,\ell)}
=\frac{1-\alpha}{5}\left(\frac{2}{\ell}\right)^{\alpha},\;
S_{1}^{R}{(\alpha,\ell)}=2\left(\frac{2}{\ell}\right)^{\alpha}.
\end{array}
$$
\end{theorem}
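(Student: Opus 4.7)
The starting point is the one-step recursion
\begin{equation*}
\frac{|\varpi_{1,\ell+1}^{(\alpha)}|}{|\varpi_{1,\ell}^{(\alpha)}|}=\frac{\ell-\alpha}{\ell+1},
\end{equation*}
which is immediate from $\varpi_{1,\ell}^{(\alpha)}=(-1)^\ell\binom{\alpha}{\ell}$ and the sign pattern $\varpi_{1,\ell}^{(\alpha)}<0$ for $\ell\ge 1$. Rewriting this as $\alpha|\varpi_{1,\ell}^{(\alpha)}|=\ell|\varpi_{1,\ell}^{(\alpha)}|-(\ell+1)|\varpi_{1,\ell+1}^{(\alpha)}|$ and telescoping from $k=\ell$ to $\infty$, together with the decay $\ell|\varpi_{1,\ell}^{(\alpha)}|=O(\ell^{-\alpha})\to 0$, gives the clean identity
\begin{equation*}
\sum_{k=\ell}^{\infty}|\varpi_{1,k}^{(\alpha)}|=\frac{\ell}{\alpha}\,|\varpi_{1,\ell}^{(\alpha)}|.
\end{equation*}
This immediately reduces part (2) to part (1): the upper tail follows by substituting $B_1^R$ and simplifying to $\ell^{\alpha+1}\le(\ell+1)^{\alpha+1}$, and the lower tail reduces to the elementary numerical check $\exp(-(\alpha+1)^2(\pi^2/6-5/4))\ge 1/5$, which holds with a tight margin since $4(\pi^2/6-5/4)=2\pi^2/3-5\approx 1.58<\log 5\approx 1.61$.

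For the pointwise bound (1), I plan to iterate the recursion to get the product form
\begin{equation*}
|\varpi_{1,\ell}^{(\alpha)}|=\alpha\prod_{k=2}^{\ell}\Bigl(1-\frac{\alpha+1}{k}\Bigr),
\end{equation*}
split off the $k=2$ factor $(1-\alpha)/2$ (the source of the $(1-\alpha)$ in $\widetilde{B}_1^L$), and take logarithms. The upper bound then follows from $\log(1-x)\le -x$ combined with the lower harmonic estimate $\sum_{k=2}^{\ell}1/k\ge\log((\ell+1)/2)$, which packages into $|\varpi_{1,\ell}^{(\alpha)}|\le\alpha\cdot 2^{\alpha+1}/(\ell+1)^{\alpha+1}$. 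The lower bound uses the two-term Taylor inequality $\log(1-x)\ge -x-x^2/(2(1-x))$ applied to the factors $k\ge 3$; the linear piece together with the matching upper harmonic estimate $\sum_{k=3}^{\ell}1/k\le\log(\ell/2)$ (sharp enough via $H_\ell\le\log\ell+\gamma+1/(2\ell)$) produces the leading $2^{\alpha}/\ell^{\alpha+1}$, while the quadratic remainder is controlled by $(\alpha+1)^2\sum_{k\ge 3}1/k^2=(\alpha+1)^2(\pi^2/6-5/4)$ after a uniform-in-$\alpha$ bound on $1/(1-(\alpha+1)/k)$, and exponentiation yields the factor in $\widetilde{B}_1^L$.

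The main obstacle will be precision. Both the harmonic comparisons and the remainder estimate must be tight enough to match the exact prefactors $2^{\alpha+1}$, $2^{\alpha}$ and the exponent $\pi^2/6-5/4$ uniformly in $\alpha\in(0,1)$; any slackness in the Taylor inequality or in the zeta-tail bound propagates directly into the constants, and the lower-tail check above shows there is almost no room. A cleaner alternative is the integral representation
\begin{equation*}
|\varpi_{1,\ell}^{(\alpha)}|=\frac{\sin(\pi\alpha)}{\pi}\int_0^1 t^{\ell-\alpha-1}(1-t)^{\alpha}\,dt,
\end{equation*}
obtained via the Beta-function identity and the reflection formula; after the substitution $t=e^{-u}$ the two-sided inequalities $u-u^2/2\le 1-e^{-u}\le u$ give pointwise estimates of the correct shape $\ell^{-\alpha-1}$, reducing the problem to incomplete-Gamma bounds. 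Whichever route is taken, the small cases $\ell=3,4,5$ (where the asymptotic estimates are weakest and the factor $\log(\ell/2)$ is thinnest) will have to be verified by direct computation, which is presumably why the statement imposes $\ell\ge 3$.
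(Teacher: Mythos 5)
First, a caveat about the target: the paper does not actually prove Theorem \ref{th.4} --- it is quoted from Dimitrov \cite{dim}, and Appendix B only proves the companion Theorems \ref{th.5}--\ref{th.8} --- so the only in-paper point of comparison is the proof of Theorem \ref{th.5}, which works with the same product representation $|\varpi_{1,\ell}^{(\alpha)}|=|\varpi_{1,2}^{(\alpha)}|\prod_{k=3}^{\ell}\bigl(1-\frac{\alpha+1}{k}\bigr)$ and harmonic-sum comparisons. Within your proposal, the exact tail identity $\sum_{k\ge\ell}|\varpi_{1,k}^{(\alpha)}|=\frac{\ell}{\alpha}|\varpi_{1,\ell}^{(\alpha)}|$ is correct (telescope $\alpha|\varpi_{1,k}^{(\alpha)}|=k|\varpi_{1,k}^{(\alpha)}|-(k+1)|\varpi_{1,k+1}^{(\alpha)}|$), and it does reduce part (2) to part (1) cleanly; the numerical check $2\pi^2/3-5<\ln 5$ is right, and the upper bound in (1) via $1-x\le e^{-x}$ together with $\sum_{k=2}^{\ell}1/k>\ln\frac{\ell+1}{2}$ is essentially complete. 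This is a tidier route to part (2) than re-summing pointwise bounds.

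The gap is in the lower bound of part (1). Your remainder control $\log(1-x)\ge -x-\frac{x^2}{2(1-x)}$ followed by ``a uniform-in-$\alpha$ bound on $1/(1-(\alpha+1)/k)$'' cannot deliver the coefficient $1$ in front of $\sum_{k\ge3}k^{-2}$ that the constant $\exp\bigl(-(\alpha+1)^2(\pi^2/6-5/4)\bigr)$ requires: you would need $\frac{1}{2(1-x)}\le 1$, i.e.\ $x\le\frac12$, whereas at $k=3$ one has $x=\frac{1+\alpha}{3}$ up to $\frac23$, where $\frac{1}{2(1-x)}=\frac32$. This overage at $k=3$ does not disappear for large $\ell$ (that factor sits inside the product for every $\ell\ge3$), and the slack in your harmonic and zeta-tail estimates is not obviously enough to absorb a blanket factor $\frac32$; one would have to exploit the negative deficits at $k\ge5$, which you do not do. The clean fix is the sharper elementary inequality $1-x\ge e^{-x-x^2}$, valid on $0<x\le\frac23$ (check: $\frac{d}{dx}\bigl[\log(1-x)+x+x^2\bigr]=\frac{x(1-2x)}{1-x}$, and the value at $x=\frac23$ is $\ln\frac13+\frac{10}{9}>0$); since $(1+\alpha)/k\le\frac23$ for all $k\ge3$ and $\alpha<1$, it gives directly $\prod_{k=3}^{\ell}\bigl(1-\frac{1+\alpha}{k}\bigr)\ge\exp\bigl(-(1+\alpha)\sum_{k=3}^{\ell}\frac1k-(1+\alpha)^2\sum_{k=3}^{\ell}\frac{1}{k^2}\bigr)\ge\bigl(\frac{2}{\ell}\bigr)^{1+\alpha}\exp\bigl(-(1+\alpha)^2(\frac{\pi^2}{6}-\frac54)\bigr)$, which is exactly $\widetilde{B}_{1}^{L}$. (Note that the paper's Lemma a(ii), $1-x>e^{-2x}$, is too crude for this purpose --- it is precisely what produces the weaker exponent $2(\alpha+1)$ in Theorem \ref{th.5}.) The Beta-integral alternative you mention is viable in principle, but you have not carried it far enough for the constants to be checked.
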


In this paper, we can give tighter estimates for the lower bounds. See the following theorem.

\begin{theorem}\label{th.5} The first-order coefficients
$\varpi_{1,\ell}^{(\alpha)},\;(0<\alpha<1)$ satisfy
$$
\begin{array}{lll}
\displaystyle
(1)\;\;\;B_{1}^{L}{(\alpha,\ell)}
<\left|\varpi_{1,\ell}^{(\alpha)}\right|<B_{1}^{R}{(\alpha,\ell)},\;\ell\geq3,\;where\;\;B_{1}^{L}{(\alpha,\ell)}
=\frac{\alpha(1-\alpha)}{2}\left(\frac{2}{\ell}\right)^{2(\alpha+1)},\vspace{0.2 cm}\\
\displaystyle(2)\;\;\; S_{1}^{L}{(\alpha,\ell)}
<\sum\limits_{k=\ell}^{\infty}\left|\varpi_{1,k}^{(\alpha)}\right|<S_{1}^{R}{(\alpha,\ell)},
\;\ell\geq3,\;where\;\;S_{1}^{L}{(\alpha,\ell)}
=\frac{\alpha(1-\alpha)}{2\alpha+1}\left(\frac{2}{\ell}\right)^{2\alpha+1}.
\end{array}
$$
\end{theorem}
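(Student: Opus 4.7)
My plan is to concentrate on the two lower bounds, since Theorem \ref{th.4} already supplies the upper bounds $B_{1}^{R}(\alpha,\ell)$ and $S_{1}^{R}(\alpha,\ell)$. The starting point is the product representation
$$
|\varpi_{1,\ell}^{(\alpha)}| = \frac{\alpha(1-\alpha)}{\ell}\prod_{k=2}^{\ell-1}\left(1-\frac{\alpha}{k}\right),\qquad \ell\ge 2,
$$
which follows from $\varpi_{1,\ell}^{(\alpha)}=(-1)^{\ell}\binom{\alpha}{\ell}$ together with the observation that each factor $\alpha-j$ with $j\ge 1$ is negative when $\alpha\in(0,1)$. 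A direct check shows that at $\ell=2$ the empty product gives $|\varpi_{1,2}^{(\alpha)}|=\alpha(1-\alpha)/2=B_{1}^{L}(\alpha,2)$, i.e.\ the two sides agree.

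For part (1) I would induct on $\ell\ge 2$, starting from this equality as the base case. Writing down the multiplicative recurrences
$$
\frac{|\varpi_{1,\ell+1}^{(\alpha)}|}{|\varpi_{1,\ell}^{(\alpha)}|}=\frac{\ell-\alpha}{\ell+1},\qquad \frac{B_{1}^{L}(\alpha,\ell+1)}{B_{1}^{L}(\alpha,\ell)}=\left(\frac{\ell}{\ell+1}\right)^{2(\alpha+1)},
$$
reduces the inductive step to the strict inequality $(\ell-\alpha)(\ell+1)^{2\alpha+1}>\ell^{2\alpha+2}$ for $\ell\ge 2$ and $\alpha\in(0,1)$. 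Since $2\alpha+1>1$, Bernoulli's inequality gives $(1+1/\ell)^{2\alpha+1}>1+(2\alpha+1)/\ell$ strictly, and multiplying by $(\ell-\alpha)\ell^{2\alpha+1}$ and expanding yields
$$
(\ell-\alpha)(\ell+1)^{2\alpha+1}>\ell^{2\alpha+2}+\ell^{2\alpha}\bigl[(\alpha+1)\ell-\alpha(2\alpha+1)\bigr].
$$
The bracket is nondecreasing in $\ell$, and at $\ell=2$ it equals $-2\alpha^{2}+\alpha+2$, whose roots $(1\pm\sqrt{17})/4$ straddle the interval $[0,1]$; hence it is strictly positive for every $\ell\ge 2$ and $\alpha\in(0,1)$. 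The inductive step therefore goes through and transmits strict inequality from $\ell=2$ to every $\ell\ge 3$.

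Part (2) is then immediate: applying part (1) termwise together with monotonicity of $x^{-(2\alpha+2)}$,
$$
\sum_{k=\ell}^{\infty}|\varpi_{1,k}^{(\alpha)}|>\alpha(1-\alpha)\,2^{2\alpha+1}\sum_{k=\ell}^{\infty}\frac{1}{k^{2\alpha+2}}>\alpha(1-\alpha)\,2^{2\alpha+1}\int_{\ell}^{\infty}\frac{dx}{x^{2\alpha+2}}=\frac{\alpha(1-\alpha)}{2\alpha+1}\left(\frac{2}{\ell}\right)^{2\alpha+1}=S_{1}^{L}(\alpha,\ell).
$$
The only real technicality is the elementary quadratic bookkeeping behind the key inequality $(\ell-\alpha)(\ell+1)^{2\alpha+1}>\ell^{2\alpha+2}$; once the product representation is recorded and Bernoulli's inequality is invoked, no substantive obstacle remains, and part (2) falls out of part (1) with essentially no extra work.
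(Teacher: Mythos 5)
Your proposal is correct, and for part (1) it takes a genuinely different route from the paper. The paper proves the lower bound $B_{1}^{L}(\alpha,\ell)<|\varpi_{1,\ell}^{(\alpha)}|$ by telescoping the recurrence $|\varpi_{1,\ell}^{(\alpha)}|=\bigl(1-\tfrac{\alpha+1}{\ell}\bigr)|\varpi_{1,\ell-1}^{(\alpha)}|$ down to $|\varpi_{1,2}^{(\alpha)}|=\tfrac{\alpha(1-\alpha)}{2}$, applying the auxiliary inequality $1-x>\exp(-2x)$ (valid for $0<x\leq 0.7968$, which must be checked against $x=\tfrac{\alpha+1}{k}\leq\tfrac{2}{3}$ for $k\geq 3$), and then estimating the resulting harmonic sum by $\sum_{k=3}^{\ell}\tfrac{1}{k}<\ln\tfrac{\ell}{2}$; the factor $2$ in the exponent of that inequality is precisely what produces the exponent $2(\alpha+1)$ in $B_{1}^{L}$. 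You instead run an induction on $\ell$ starting from the sharp base case $|\varpi_{1,2}^{(\alpha)}|=B_{1}^{L}(\alpha,2)$ and reduce the step to $(\ell-\alpha)(\ell+1)^{2\alpha+1}>\ell^{2\alpha+2}$, which you settle with Bernoulli's inequality and an elementary quadratic check; I verified the ratio identities, the expansion, and the positivity of the bracket $(\alpha+1)\ell-\alpha(2\alpha+1)$ on $\ell\geq 2$, $\alpha\in(0,1)$, so the step is sound and the strictness propagates correctly to $\ell\geq 3$. Your route is more self-contained (no range-restricted exponential lemma, no harmonic-sum estimate) and makes visible that the constant in $B_{1}^{L}$ is exactly the one forced by equality at $\ell=2$; the paper's route is the one it reuses almost verbatim for Theorem~\ref{th.7}, so it transfers more directly to the order-$(1+\alpha)$ coefficients. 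Your treatment of the upper bounds (citing Theorem~\ref{th.4}) matches the paper, which likewise imports them from Dimitrov, and your part (2) — termwise application of part (1) followed by the integral comparison $\sum_{k=\ell}^{\infty}k^{-2(\alpha+1)}>\int_{\ell}^{\infty}x^{-2(\alpha+1)}\,{\rm d}x$ — is essentially identical to the paper's argument.
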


Next, we list the comparison theorem for $\widetilde{B}_{1}^{L}{(\alpha,\ell)}$ and ${B}_{1}^{L}{(\alpha,\ell)}$,
 $\widetilde{S}_{1}^{L}{(\alpha,\ell)}$ and ${S}_{1}^{L}{(\alpha,\ell)}$, respectively.

\begin{theorem}\label{th.6} The following inequalities hold:
$$
\begin{array}{lll}\displaystyle
(1)\;\;{B}_{1}^{L}(\alpha,3)<\widetilde{B}_{1}^{L}(\alpha,3)\;\;for\;\;0<\alpha<\frac{12\ln\frac{3}{2}}{2\pi^2-15}-1\approx0.0267;
\;\;{B}_{1}^{L}(\alpha,4)<\widetilde{B}_{1}^{L}(\alpha,4)\;\;for\\\displaystyle\;\;0<\alpha<\frac{12\ln2}{2\pi^2-15}-1\approx0.7551;
\;\;{B}_{1}^{L}(\alpha,\ell)<\widetilde{B}_{1}^{L}(\alpha,\ell)\;\;for\;\;0<\alpha<1\;\;and\;\;\ell\geq5;\vspace{0.2cm}
\\
(2)\;\;{S}_{1}^{L}(\alpha,\ell)<\widetilde{S}_{1}^{L}(\alpha,\ell)\;\;for\;\;0<\alpha<1\;\;and\;\;\ell\geq3.
\end{array}
$$
\end{theorem}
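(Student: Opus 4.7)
The strategy for both parts is to form the ratio of the two lower bounds, cancel common prefactors, and reduce each claim to an elementary scalar inequality in $\alpha$.

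\emph{Part (1).} After cancellation of the common factor $\alpha(1-\alpha)\,2^{\alpha}$, one finds
\[
\frac{B_{1}^{L}(\alpha,\ell)}{\widetilde B_{1}^{L}(\alpha,\ell)}
= \Bigl(\tfrac{2}{\ell}\Bigr)^{\alpha+1}\exp\!\Bigl((\alpha+1)^{2}\,c\Bigr),
\qquad c:=\tfrac{\pi^{2}}{6}-\tfrac{5}{4}=\tfrac{2\pi^{2}-15}{12}>0.
\]
Taking logs and dividing out the positive factor $\alpha+1$, this ratio is strictly less than $1$ if and only if $(\alpha+1)\,c<\ln(\ell/2)$, i.e.\ $\alpha<\tfrac{12\ln(\ell/2)}{2\pi^{2}-15}-1$. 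Substituting $\ell=3$ and $\ell=4$ reproduces the two stated thresholds. For $\ell\geq 5$ the right-hand side is monotone increasing in $\ell$; at $\ell=5$ it already exceeds~$1$, using $\ln(5/2)>0.91$ (since $e^{0.91}<5/2$) together with $c<0.404$ (from $\pi^{2}<9.93$), so the condition holds for every $\alpha\in(0,1)$.

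\emph{Part (2).} The analogous cancellation of $(1-\alpha)$ gives
\[
\frac{S_{1}^{L}(\alpha,\ell)}{\widetilde S_{1}^{L}(\alpha,\ell)}
= \frac{5\alpha}{2\alpha+1}\Bigl(\tfrac{2}{\ell}\Bigr)^{\alpha+1}.
\]
Since $(2/\ell)^{\alpha+1}$ is decreasing in $\ell$, it suffices to treat $\ell=3$, which reduces the claim to
\[
5\alpha\,(2/3)^{\alpha+1}<2\alpha+1\qquad\text{for all }\alpha\in(0,1).
\]
I would split the range at $\alpha=3/4$. On $(0,3/4]$, the coarse bound $(2/3)^{\alpha+1}<2/3$ (strict since $\alpha+1>1$) yields $5\alpha\,(2/3)^{\alpha+1}<10\alpha/3\leq 2\alpha+1$. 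On $[3/4,1)$, use the sharper estimate $(2/3)^{\alpha+1}\leq (2/3)^{7/4}<3/5$; the last inequality rewrites as $(3/2)^{7}>(5/3)^{4}$, i.e.\ the elementary rational comparison $2187\cdot 81 = 177147 > 80000 = 625\cdot 128$. Then $5\alpha(2/3)^{\alpha+1}<3\alpha<2\alpha+1$ because $\alpha<1$.

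\emph{Main obstacle.} Part~(1) is essentially algebraic once the ratio is simplified; the $\ell\geq 5$ case only needs crude numerical estimates on $\pi^{2}$ and $\ln(5/2)$. The delicate step is Part~(2): both sides of $5\alpha(2/3)^{\alpha+1}<2\alpha+1$ are of comparable magnitude as $\alpha\to 1$, so no single coarse bound on $(2/3)^{\alpha+1}$ can cover the whole interval $(0,1)$. Splitting at $\alpha=3/4$ is the natural choice because that is precisely where the crude upper bound $10\alpha/3$ meets $2\alpha+1$, and the integer comparison $(3/2)^{7}>(5/3)^{4}$ closes the upper sub-interval without invoking any transcendental estimates.
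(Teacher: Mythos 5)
Your proposal is correct. For part (1) your argument coincides with the paper's: both reduce $B_1^L/\widetilde B_1^L<1$ to the threshold condition $\alpha<\alpha_\ell:=\frac{12\ln(\ell/2)}{2\pi^2-15}-1$ and then observe that $\alpha_5>1$ (the paper bounds $\alpha_5\geq\frac52\ln\frac52-1>\frac94-1>1$; you use monotonicity in $\ell$ plus crude estimates on $\ln(5/2)$ and $\pi^2$ — note that $\pi^2<9.93$ literally gives $c<0.405$ rather than $0.404$, but $2\cdot 0.405<0.91$ still closes the gap). For part (2) the routes genuinely diverge. The paper studies $f(\alpha,3)=\frac{2\alpha+1}{5\alpha}\left(\frac32\right)^{1+\alpha}$ by calculus: it shows $\ln f$ is convex in $\alpha$, solves $g_\alpha=0$ to locate the minimizer $\alpha^\star=\frac{-1+\sqrt{1+8/\ln(3/2)}}{4}$, and evaluates $f(\alpha^\star,3)\approx 1.3443>1$ numerically. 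You instead prove the equivalent inequality $5\alpha(2/3)^{\alpha+1}<2\alpha+1$ by splitting at $\alpha=3/4$, using the linear bound $10\alpha/3\leq 2\alpha+1$ on the lower piece and the rational comparison $(3/2)^7>(5/3)^4$ (i.e.\ $3^{11}=177147>80000=5^4\cdot 2^7$) on the upper piece. Your version buys complete rigor with only integer arithmetic — no transcendental evaluation of $\alpha^\star$ or of $f(\alpha^\star,3)$ is needed — at the cost of being slightly less informative (the paper's computation exhibits the actual minimum value of the ratio). Both reductions to $\ell=3$ via monotonicity in $\ell$ are identical.
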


Finally, we give the bounds for the other coefficients.

\begin{theorem}\label{th.7} The first-order coefficients
$\varpi_{1,\ell}^{(1+\alpha)},\;(0<\alpha<1)$ satisfy
$$
\begin{array}{lll}
(1)\;
\displaystyle{\overline{B}}_{1}^{L}{(1+\alpha,\ell)}
<\left|\varpi_{1,\ell}^{(1+\alpha)}\right|<{\overline{B}}_{1}^{R}{(1+\alpha,\ell)},\;\ell\geq4,\vspace{0.2 cm}\\\displaystyle\;where\;\;
{\overline{B}}_{1}^{L}{(1+\alpha,\ell)}=\frac{(1-\alpha)\alpha(1+\alpha)}{6}\left(\frac{3}{\ell}\right)^{2(2+\alpha)},
{\overline{B}}_{1}^{R}{(1+\alpha,\ell)}=\frac{\alpha(1+\alpha)}{2}\left(\frac{3}{\ell+1}\right)^{2+\alpha}
,\vspace{0.2 cm}\\
(2)\;\displaystyle
{\overline{S}}_{1}^{L}{(1+\alpha,\ell)}
<\sum\limits_{k=\ell}^{\infty}\left|\varpi_{1,k}^{(1+\alpha)}\right|
<{\overline{S}}_{1}^{R}{(1+\alpha,\ell)},\;\ell\geq4,\vspace{0.2 cm}\\\displaystyle\;where\;\;
{\overline{S}}_{1}^{L}{(1+\alpha,\ell)}=\frac{(1-\alpha)\alpha(1+\alpha)}{2(3+2\alpha)}\left(\frac{3}{\ell}\right)^{3+2\alpha},\;
{\overline{S}}_{1}^{R}{(1+\alpha,\ell)}=
\frac{3\alpha}{2}\left(\frac{3}{\ell}\right)^{1+\alpha}.
\end{array}
$$
\end{theorem}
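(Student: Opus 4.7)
My plan is to begin by deriving the closed form
\[
 |\varpi_{1,\ell}^{(1+\alpha)}| = \frac{\alpha(1+\alpha)\,\Gamma(\ell-1-\alpha)}{\Gamma(1-\alpha)\,\ell!}, \qquad \ell \ge 2,
\]
from $\varpi_{1,\ell}^{(1+\alpha)}=(-1)^\ell\binom{1+\alpha}{\ell}$, after first checking that $\varpi_{1,\ell}^{(1+\alpha)}>0$ for $\ell\ge 2$ (every factor $k-\alpha$ with $k\ge 2$ is positive when $\alpha\in(0,1)$). I would then observe the algebraic identity
\[
 \frac{\Gamma(\ell-1-\alpha)}{\ell!} = \frac{1}{1+\alpha}\left[\frac{\Gamma(\ell-1-\alpha)}{(\ell-1)!} - \frac{\Gamma(\ell-\alpha)}{\ell!}\right],
\]
which causes the tail sum to telescope (with vanishing boundary term at infinity since $\Gamma(N-\alpha)/N!=O(N^{-1-\alpha})$) and yields the key reduction
\[
 \sum_{k=\ell}^{\infty}|\varpi_{1,k}^{(1+\alpha)}| = |\varpi_{1,\ell-1}^{(\alpha)}|, \qquad \ell\ge 2.
\]
Part~(2) then drops out of Theorem~\ref{th.5}: the upper half collapses to the trivial comparison $2^{\alpha+2}\le 3^{\alpha+2}$, while the lower half reduces to
\[
 \frac{3+2\alpha}{3(1+\alpha)}\left(\frac{2}{3}\right)^{\!2\alpha+2}\ell\left(\frac{\ell}{\ell-1}\right)^{\!2\alpha+2}\ge 1,
\]
which I would check directly at $\ell=4,5$ (covering the small-$\ell$ regime where this expression is not yet monotone in $\ell$) and then deduce for larger $\ell$ from its eventual linear growth.

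For Part~(1) the same reduction is too crude, because Theorem~\ref{th.5} only forces the decay $\ell^{-(1+\alpha)}$, whereas $|\varpi_{1,\ell}^{(1+\alpha)}|$ decays like $\ell^{-(2+\alpha)}$. I would instead induct on $\ell\ge 4$, using the exact ratio
\[
 \frac{|\varpi_{1,\ell+1}^{(1+\alpha)}|}{|\varpi_{1,\ell}^{(1+\alpha)}|} \;=\; \frac{\ell-1-\alpha}{\ell+1}.
\]
The upper induction step asks for $(\ell-1-\alpha)/(\ell+1)\le\bigl((\ell+1)/(\ell+2)\bigr)^{2+\alpha}$, and is immediate from Bernoulli's inequality $(1-x)^{p}\ge 1-px$ applied with $p=2+\alpha\ge 1$ and $x=1/(\ell+2)$. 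The lower induction step asks for $(\ell-1-\alpha)/(\ell+1)\ge\bigl(\ell/(\ell+1)\bigr)^{2(2+\alpha)}$, and here I would invoke the sharper second-order inequality
\[
 (1-x)^{q} \le 1 - qx + \binom{q}{2}x^{2}, \qquad q\ge 2,\ x\in[0,1],
\]
proved by noting that the second derivative of the difference equals $q(q-1)\bigl(1-(1-x)^{q-2}\bigr)\ge 0$. Applied with $q=2(2+\alpha)$ and $x=1/(\ell+1)$, it reduces to the linear condition $\ell+1\ge 3+2\alpha$, which is satisfied precisely when $\ell\ge 4$ and $\alpha<1$ -- exactly matching the hypotheses of the theorem.

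The main technical obstacle will be the base case $\ell=4$ for Part~(1), which collapses to the two single-variable inequalities
\[
 2-\alpha \;>\; 4\,(3/4)^{4+2\alpha}, \qquad (1-\alpha)(2-\alpha) \;<\; 12\,(3/5)^{2+\alpha},\qquad \alpha\in(0,1).
\]
I would dispatch both by computing the second derivatives in $\alpha$ and noting that the corresponding differences are strictly concave on $[0,1]$; since each difference is strictly positive at the two endpoints $\alpha=0$ (where the inequalities become $2>81/64$ and $2<108/25$) and $\alpha=1$, concavity forces strict positivity on the entire open interval, closing the argument.
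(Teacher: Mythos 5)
Your proposal is correct, and every step I checked goes through: the closed form $|\varpi_{1,\ell}^{(1+\alpha)}|=\alpha(1+\alpha)\Gamma(\ell-1-\alpha)/(\Gamma(1-\alpha)\,\ell!)$ is right, the telescoping identity is an exact algebraic identity whose tail indeed collapses to $|\varpi_{1,\ell-1}^{(\alpha)}|$, the two induction steps reduce (via Bernoulli and your second-order refinement) to $-2\le\alpha$ and $\ell+1\ge 3+2\alpha$ respectively, and the base-case and small-$\ell$ numerical inequalities all hold with the concavity/monotonicity arguments you indicate (for the tail lower bound, note that $\ell\mapsto\ell\bigl(\ell/(\ell-1)\bigr)^{2\alpha+2}$ is increasing precisely for $\ell\ge 2\alpha+3$, which is why checking $\ell=4,5$ suffices). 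However, your route is genuinely different from the paper's. The paper simply states that the proof is ``almost the same as that of Theorem \ref{th.5}'': there one writes $|\varpi_{1,\ell}^{(1+\alpha)}|$ as the product $\prod_{k=4}^{\ell}\bigl(1-\tfrac{2+\alpha}{k}\bigr)\,|\varpi_{1,3}^{(1+\alpha)}|$, squeezes each factor between $\exp(-2(2+\alpha)/k)$ and $\exp(-(2+\alpha)/k)$ using Lemma a (the lower bound being admissible since $(2+\alpha)/k\le 3/4<0.7968$ for $k\ge4$), compares $\sum 1/k$ with a logarithm to get part (1), and then bounds the tail sums by integrals to get part (2). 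Your approach replaces the integral estimate for the tail by the exact identity $\sum_{k\ge\ell}|\varpi_{1,k}^{(1+\alpha)}|=|\varpi_{1,\ell-1}^{(\alpha)}|$, which makes part (2) a near-immediate corollary of Theorem \ref{th.5} (the paper never exploits this clean reduction), and replaces the product-to-exponential comparison for part (1) by an induction on the exact one-step ratio, which avoids both Lemma a and the integral comparison at the price of a slightly fussy base case at $\ell=4$. The paper's method is more uniform and transfers mechanically to other orders; yours is more elementary and, for the tail sums, sharper in spirit since the reduction is exact rather than asymptotic.
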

%

\begin{theorem}\label{th.8} The second-order coefficients
$\varpi_{2,\ell}^{(\alpha)}$ and $\varpi_{2,\ell}^{(\alpha+1)}$ for $0<\alpha<1$ satisfy
$$
\begin{array}{lll}
(1)\;\;\;\displaystyle{B}_{2}^{L}{(\alpha,\ell)}
<\left|\varpi_{2,\ell}^{(\alpha)}\right|<{B}_{2}^{R}{(\alpha,\ell)},\;\ell\geq4,\;\vspace{0.2 cm}\\\;\;where\;\;
{B}_{2}^{L}{(\alpha,\ell)}=
\displaystyle\left(\frac{3}{2}\right)^{\alpha}\left[\left(1+\left(\frac{1}{3}\right)^{\ell}\right)
\frac{\alpha(1-\alpha)}{2}\left(\frac{2}{\ell}\right)^{2(1+\alpha)}-\left(1-\left(\frac{1}{3}\right)^{\ell-1}\right)
\frac{\alpha^22^{2\alpha+1}}{1+(\alpha+1)\ell}\right],\vspace{0.3 cm}\\
{B}_{2}^{R}{(\alpha,\ell)}=\displaystyle\left(\frac{3}{2}\right)^{\alpha}\left[\left(1+\left(\frac{1}{3}\right)^{\ell}\right)
\frac{\alpha2^{\alpha+1}}{(\ell+1)^{\alpha+1}}-
\frac{\alpha^2(1-\alpha)^24^{2\alpha+1}}{2}\left(1-\left(\frac{1}{3}\right)^{\ell-1}\right)
\left(\frac{2}{\ell}\right)^{4(\alpha+1)}\right].
\end{array}
$$
$$
\begin{array}{lll}
(2)\;\;\;\displaystyle{\overline{B}}_{2}^{L}{(1+\alpha,\ell)}
<\left|\varpi_{2,\ell}^{(1+\alpha)}\right|<{\overline{B}}_{2}^{R}{(1+\alpha,\ell)},\;\ell\geq4,\vspace{0.2 cm}\\\;\;where\;\;
\displaystyle{\overline{B}}_{2}^{L}{(1+\alpha,\ell)}=\left(\frac{3}{2}\right)^{1+\alpha}
\left[\left(1+\left(\frac{1}{3}\right)^{\ell}\right)
\frac{(1-\alpha)\alpha(1+\alpha)}{6}\left(\frac{3}{\ell}\right)^{2(2+\alpha)}\right.\vspace{0.2cm}\\
+\displaystyle \frac{(1-\alpha)^2\alpha^2(1+\alpha)^2}{216}\left(1-\left(\frac{1}{3}\right)^{\ell-3}\right)
\left(\frac{6}{\ell}\right)^{4(2+\alpha)}
-\left.\frac{\alpha(1+\alpha)^2}{2}\left(\frac{1}{3} +\left(\frac{1}{3}\right)^{\ell-1}\right)
\left(\frac{3}{\ell}\right)^{2+\alpha} \right],\vspace{0.2cm}\\

\displaystyle{\overline{B}}_{2}^{R}{(1+\alpha,\ell)}=\left(\frac{3}{2}\right)^{1+\alpha}
\left[\left(1+\left(\frac{1}{3}\right)^{\ell}\right)
\frac{\alpha(\alpha+1)3^{\alpha+2}}{2(\ell+1)^{\alpha+2}} +
\left(1-\left(\frac{1}{3}\right)^{\ell-3}\right)
\frac{\alpha^2(1+\alpha)^23^{2(2+\alpha)}}{24(1+(2+\alpha)\ell)}
\right.\vspace{0.2 cm}\\\displaystyle\left.-\frac{(1-\alpha)\alpha(1+\alpha)^2}{6}
\left(\frac{1}{3}
+\left(\frac{1}{3}\right)^{\ell-1}\right)\left(\frac{3}{\ell-1}\right)^{2(2+\alpha)}
\right].
\end{array}$$

\end{theorem}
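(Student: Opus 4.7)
The starting point is the convolution-like identity
$$\varpi_{2,\ell}^{(\alpha)}=\left(\tfrac{3}{2}\right)^{\alpha}\sum_{\ell_{1}=0}^{\ell}\left(\tfrac{1}{3}\right)^{\ell_{1}}\varpi_{1,\ell_{1}}^{(\alpha)}\varpi_{1,\ell-\ell_{1}}^{(\alpha)}$$
recorded at the beginning of Section 2, which reduces the problem to two-fold convolutions of the first-order coefficients. For part (1), the plan is to split this sum by the sign pattern of its factors. Using $\varpi_{1,0}^{(\alpha)}=1$, the endpoints $\ell_{1}=0$ and $\ell_{1}=\ell$ combine into $(1+(1/3)^{\ell})\varpi_{1,\ell}^{(\alpha)}$; since $\varpi_{1,\ell}^{(\alpha)}<0$ for $\ell\ge1$ and $\alpha\in(0,1)$, while each interior product $\varpi_{1,\ell_{1}}^{(\alpha)}\varpi_{1,\ell-\ell_{1}}^{(\alpha)}$ with $1\le\ell_{1}\le\ell-1$ is positive, I obtain
$$|\varpi_{2,\ell}^{(\alpha)}|=\left(\tfrac{3}{2}\right)^{\alpha}\left[(1+(1/3)^{\ell})|\varpi_{1,\ell}^{(\alpha)}|-\sum_{\ell_{1}=1}^{\ell-1}(1/3)^{\ell_{1}}|\varpi_{1,\ell_{1}}^{(\alpha)}||\varpi_{1,\ell-\ell_{1}}^{(\alpha)}|\right].$$

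The lower bound $B_{2}^{L}(\alpha,\ell)$ then follows by bounding the endpoint magnitude from below with $B_{1}^{L}(\alpha,\ell)$ of Theorem \ref{th.5}(1), and the subtracted interior sum from above by applying the pointwise estimate $|\varpi_{1,j}^{(\alpha)}|\le\alpha$ (recall that $|\varpi_{1,j}^{(\alpha)}|$ is monotonically decreasing for $j\ge1$ when $\alpha\in(0,1)$, as noted after the coefficient formulas) to one factor and $B_{1}^{R}(\alpha,\cdot)$ to the other, followed by an integral-comparison evaluation of the resulting weighted sum $\sum_{\ell_{1}=1}^{\ell-1}(1/3)^{\ell_{1}}(\ell-\ell_{1}+1)^{-(\alpha+1)}$ which produces the denominator $1+(\alpha+1)\ell$. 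The upper bound $B_{2}^{R}(\alpha,\ell)$ is derived by the same decomposition with the roles of $B_{1}^{L}$ and $B_{1}^{R}$ reversed, which explains why $(1-\alpha)^{2}$ appears in $B_{2}^{R}$ as the square of the $(1-\alpha)$ factor carried by $B_{1}^{L}$.

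Part (2) runs through the same strategy but with one additional level of splitting dictated by the sign pattern for $\beta=1+\alpha\in(1,2)$, where $\varpi_{1,0}^{(\beta)}>0$, $\varpi_{1,1}^{(\beta)}<0$, and $\varpi_{1,j}^{(\beta)}>0$ for $j\ge2$. I would decompose the sum into three blocks: endpoints $\ell_{1}\in\{0,\ell\}$, near-endpoints $\ell_{1}\in\{1,\ell-1\}$, and deep interior $2\le\ell_{1}\le\ell-2$, yielding respectively the geometric prefactors $1+(1/3)^{\ell}$, $(1/3)+(1/3)^{\ell-1}$, and $\tfrac{1}{6}(1-(1/3)^{\ell-3})$ visible in $\overline{B}_{2}^{L}$ and $\overline{B}_{2}^{R}$. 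The endpoint and deep-interior blocks are positive contributions while the near-endpoint block is negative, so the lower bound is obtained by the assignment that minimises the aggregate (Theorem \ref{th.7}: $\overline{B}_{1}^{R}$ bounds the subtracted near-endpoint magnitudes from above and $\overline{B}_{1}^{L}$ bounds the additive blocks from below), and for the upper bound the assignment is reversed. Throughout, the pointwise value $|\varpi_{1,1}^{(\beta)}|=1+\alpha$ is used on one factor of the near-endpoint block and Theorem \ref{th.7} on the other.

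The main obstacle is calibrating the interior-block estimate so that the resulting aggregate matches the precise $\ell$-dependence stated: a naive $|\varpi_{1,j}|\le\alpha$ bound combined with the geometric sum yields only an $O(1)$ estimate, whereas the claimed rates $\sim\ell^{-(\alpha+1)}$ and $\sim\ell^{-(2+\alpha)}$ force one to retain the Theorem \ref{th.5}/\ref{th.7} decay on at least one factor and carry out the resulting weighted sum via an integral or Abel-summation comparison. The triple splitting in part (2), together with its mixed signs, makes the bookkeeping considerably more delicate than in part (1), but no new ideas are required beyond careful aggregation of geometric series and one-sided estimates from Theorems \ref{th.5} and \ref{th.7}.
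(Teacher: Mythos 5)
Your overall skeleton is the same as the paper's: the convolution identity $\varpi_{2,\ell}^{(\alpha)}=(3/2)^{\alpha}\sum_{\ell_1=0}^{\ell}(1/3)^{\ell_1}\varpi_{1,\ell_1}^{(\alpha)}\varpi_{1,\ell-\ell_1}^{(\alpha)}$, the sign-based split into an endpoint block and an interior block for part (1), and the three-block split (endpoints, $\ell_1\in\{1,\ell-1\}$, deep interior) with prefactors $1+(1/3)^{\ell}$, $(1/3)+(1/3)^{\ell-1}$, $\tfrac16(1-(1/3)^{\ell-3})$ for part (2) are all exactly what the paper does, as is the use of $|\varpi_{1,1}^{(1+\alpha)}|=1+\alpha$ on one factor of the near-endpoint block. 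The $B_2^R$ direction is also essentially right: $B_1^L$ on both interior factors produces $(1-\alpha)^2$, and the $(2/\ell)^{4(\alpha+1)}$ comes from the AM--GM step $\ell_1^{2(1+\alpha)}(\ell-\ell_1)^{2(1+\alpha)}\le(\ell/2)^{4(1+\alpha)}$ followed by the pure geometric sum $\sum_{\ell_1=1}^{\ell-1}(1/3)^{\ell_1}=\tfrac12(1-(1/3)^{\ell-1})$.

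The genuine gap is in the mechanism you propose for the interior block in the $B_2^L$ direction (and its analogue in part (2)). You bound one factor by $\alpha$ and the other by $B_1^R$, and then claim that an integral/Abel comparison of $\sum_{\ell_1=1}^{\ell-1}(1/3)^{\ell_1}(\ell-\ell_1+1)^{-(\alpha+1)}$ ``produces the denominator $1+(\alpha+1)\ell$.'' It does not: that weighted sum is dominated by its small-$\ell_1$ terms and decays like $\ell^{-(\alpha+1)}$, not like $\ell^{-1}$, so it cannot directly yield the stated form (one would need a further, separately justified comparison to show it is dominated by $\frac{\alpha^2 2^{2\alpha+1}}{1+(\alpha+1)\ell}(1-(1/3)^{\ell-1})$, including for small $\ell$). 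The paper's actual device is different and cleaner: apply $B_1^R$ to \emph{both} factors, so the interior product is bounded by $\alpha^2 2^{2(\alpha+1)}\bigl[(1+\ell_1)(1+\ell-\ell_1)\bigr]^{-(\alpha+1)}$, and then decouple the product using Lemma a(iii)--(iv), namely $(1+\ell_1)^{1+\alpha}(1+\ell-\ell_1)^{1+\alpha}\ge(1+(\alpha+1)\ell_1)(1+(\alpha+1)(\ell-\ell_1))\ge 1+(\alpha+1)\ell$; this makes the bound uniform in $\ell_1$, so only the pure geometric series $\sum(1/3)^{\ell_1}$ remains and the denominator $1+(\alpha+1)\ell$ appears exactly. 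You correctly diagnosed that the interior sum is the delicate point, but the tool you reached for is the wrong one; without the Bernoulli-type product inequality your argument does not land on the stated bounds.
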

%
 All of the proofs for theorems \ref{th.4}-\ref{th.8} are given in Appendix B.

\section{Numerical methods for the Riesz-type turbulent diffusion
equation}

Define $t_k=k\tau$, $k =
0, 1,\ldots,N$, and $\tau=\frac{T}{N}$ for a given $T>0$, $h =\frac{b-a}{M}$ is the
equidistant grid size in space, $x_j = a + jh$, $j = 0, 1, \ldots
,M$.
\subsection{The 2nd-order scheme in space}
Firstly, using the Crank-Nicolson method for the Riesz space fractional turbulent diffusion equation
(\ref{eq.1}) in time direction, we can obtain
\begin{eqnarray}
\begin{array}{lll}
\displaystyle \frac{
u(x_j,t_{k+1})-u(x_j,t_{k})}{\tau}=\frac{1}{2}\left(d_2\frac{\partial^2
u(x_j,t_{k+1})}{\partial x^2}-d_1\frac{\partial
u(x_j,t_{k+1})}{\partial x}\right.\vspace{0.2 cm}\\
\displaystyle +d_{\alpha}\frac{\partial^\alpha
u(x_j,t_{k+1}) }{\partial |x|^{\alpha}} +d_2\frac{\partial^2 u(x_j,t_{k})}{\partial
x^2}-d_1\frac{\partial u(x_j,t_{k}) }{\partial
x}\vspace{0.2 cm}\\ \displaystyle\left.
+d_{\alpha}\frac{\partial^\alpha u(x_j,t_{k})}{\partial
|x|^{\alpha}}\right)+s(x_j,t_{k+\frac{1}{2}})+\mathcal
{O}(\tau^2).\label{eq.6}
\end{array}
\end{eqnarray}

Secondly, for the first- and second-order derivatives, we using the
following approximations, respectively
\begin{equation}
 \displaystyle\frac{\partial u(x_j,t_k)}{\partial
 x}={\mu_x\delta_{{x}}u(x_j,t_k)}+\mathcal {O}(h^2),\;\label{eq.7}
\end{equation}
and
\begin{equation}
 \displaystyle\frac{\partial^2 u(x_j,t_k)}{\partial
 x^2}={\delta_x^2u(x_j,t_k)}+\mathcal {O}(h^2),\label{eq.8}
\end{equation}
where $\mu_x\delta_{{x}}$ and $\delta_x^2$ are defined
by
$
\mu_x\delta_{{x}}u(x_j,t_k)=\frac{u(x_{j+1},t_k)-u(x_{j-1},t_k)}{2h},
$
and
$
\delta_x^2u(x_j,t_k)=\frac{u(x_{j+1},t_k)-2u(x_j,t_k)+u(x_{j-1},t_k)}{h^2}.
$.

Next, we choose the second-order formula to
approximate Riesz derivative,
\begin{eqnarray}
\begin{array}{lll}\displaystyle
 \frac{\partial^\alpha u(x,t)}{\partial{|x|^\alpha}}&=&\displaystyle
-\frac{1}{2\cos\left(\pi\alpha/2\right)h^\alpha}\left
(\sum\limits_{\ell=0}^{\infty}\varpi_{2,\ell}^{(\alpha)}u(x-\ell
h,t)\right.\vspace{0.2 cm}\\&&\displaystyle\left.+\sum\limits_{\ell=0}^{\infty}\varpi_{2,\ell}^{(\alpha)}u(x+\ell
h,t)\right)+\mathcal {O}(h^2).\label{eq.9}
\end{array}
\end{eqnarray}

Substituting (\ref{eq.7}), (\ref{eq.8}) and (\ref{eq.9}) into (\ref{eq.6}) and removing the high-order
term yield
\begin{eqnarray}
\begin{array}{lll}
\displaystyle
\left(\frac{d_2}{h^2}+\frac{d_1}{2h}\right)u_{j-1}^{k+1}-\left(\frac{2}{\tau}+\frac{2d_2}{h^2}\right)u_{j}^{k+1}+
\left(\frac{d_2}{h^2}-\frac{d_1}{2h}\right)u_{j+1}^{k+1}\vspace{0.1 cm}\\
\displaystyle
-\nu\left[\sum\limits_{\ell=0}^{j}\varpi_{2,\ell}^{(\alpha)}
u_{j-l}^{k+1} +\sum\limits_{\ell=0}^{M-j}\varpi_{2,\ell}^{(\alpha)}
u_{j+l}^{k+1} \right]=-
\left(\frac{d_2}{h^2}+\frac{d_1}{2h}\right)u_{j-1}^{k}\vspace{0.1 cm}\\
\displaystyle-\left(\frac{2}{\tau}-\frac{2d_2}{h^2}\right)u_{j}^{k}-
\left(\frac{d_2}{h^2}-\frac{d_1}{2h}\right)u_{j+1}^{k}
\displaystyle
+\nu\left[\sum\limits_{\ell=0}^{j}\varpi_{2,\ell}^{(\alpha)}
u_{j-l}^{k} \right.\vspace{0.1 cm}\\\displaystyle\left.+\sum\limits_{\ell=0}^{M-j}\varpi_{2,\ell}^{(\alpha)}
u_{j+l}^{k}
\right]-2s_j^{k+\frac{1}{2}},
\displaystyle\;\;j=1,\ldots,M-1,\;k=0,1,\ldots,N-1,\label{eq.10}
\end{array}
\end{eqnarray}
where $\displaystyle
\nu=\frac{d_{\alpha}}{2\cos\left(\pi\alpha/2\right)h^\alpha}. $

 Next we discuss the stability and convergence of scheme (\ref{eq.10}).

\begin{theorem} The numerical scheme (\ref{eq.10}) is
unconditionally stable and convergent with order $\mathcal O(\tau^2+h^2)$.
\end{theorem}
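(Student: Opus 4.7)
The plan is to recast (\ref{eq.10}) as a compact Crank--Nicolson iteration and then apply a classical energy estimate, with Theorem \ref{th.2} supplying the semidefiniteness of the fractional part. Set $U^k=(u_1^k,\ldots,u_{M-1}^k)^{\top}$ and rearrange (\ref{eq.10}) into
$$\bigl(I-\tfrac{\tau}{2}\mathcal{L}_h\bigr)U^{k+1}=\bigl(I+\tfrac{\tau}{2}\mathcal{L}_h\bigr)U^k+\tau S^{k+\frac{1}{2}},$$
where $\mathcal{L}_h=-d_1P_h+d_2Q_h-\nu A_h$, $\nu=d_\alpha/(2\cos(\pi\alpha/2)h^\alpha)>0$, $P_h$ is the centered first-difference matrix, $Q_h$ is the standard three-point Laplacian, and $A_h$ is the symmetric Toeplitz matrix with $(A_h)_{j,i}=\varpi_{2,|i-j|}^{(\alpha)}$ off-diagonal and $(A_h)_{j,j}=2\varpi_{2,0}^{(\alpha)}$ on the diagonal (the factor $2$ accounting for the $\ell=0$ term appearing in both sums). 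Under the homogeneous Dirichlet data, $P_h$ is skew-symmetric, $Q_h$ is symmetric and negative semidefinite, and $A_h$ is symmetric, so the symmetric part of $\mathcal{L}_h$ equals $\mathcal{L}_{h,s}=d_2Q_h-\nu A_h$.

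The decisive step is showing $A_h\succeq 0$. Since $A_h$ is not circulant, I route the question through the Fourier symbol of the corresponding infinite Toeplitz operator. Given any interior vector $v$, extend it by zero to a sequence in $\ell^2(\mathbb{Z})$, set $\hat v(\theta)=\sum_j v_je^{-ij\theta}$, and use Parseval together with the autocorrelation identity $\sum_j v_jv_{j+\ell}=\int_{-\pi}^{\pi}|\hat v(\theta)|^2\cos(\ell\theta)\,d\theta/(2\pi)$ to obtain
$$\langle A_hv,v\rangle=2\int_{-\pi}^{\pi}|\hat v(\theta)|^2\sum_{\ell=0}^{\infty}\varpi_{2,\ell}^{(\alpha)}\cos(\ell\theta)\,\frac{d\theta}{2\pi}\ge 0,$$
where non-negativity of the integrand is precisely Theorem \ref{th.2} with $p=2$. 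Since $A_h$ is the principal submatrix of this positive semidefinite infinite Toeplitz operator obtained by restricting to indices $\{1,\ldots,M-1\}$, we conclude $A_h\succeq 0$ and hence $\mathcal{L}_{h,s}\preceq 0$.

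With $\mathcal{L}_{h,s}\preceq 0$, the Crank--Nicolson identity
$$\|(I+\tfrac{\tau}{2}\mathcal{L}_h)v\|_2^2-\|(I-\tfrac{\tau}{2}\mathcal{L}_h)v\|_2^2=4\tau\langle\mathcal{L}_{h,s}v,v\rangle\le 0$$
delivers both $\|(I-\tfrac{\tau}{2}\mathcal{L}_h)^{-1}\|_2\le 1$ and $\|(I-\tfrac{\tau}{2}\mathcal{L}_h)^{-1}(I+\tfrac{\tau}{2}\mathcal{L}_h)\|_2\le 1$, so an induction on $k$ yields the unconditional $L^2$-bound $\|U^k\|_2\le\|U^0\|_2+\tau\sum_{j=0}^{k-1}\|S^{j+\frac{1}{2}}\|_2$, with no restriction linking $\tau$ and $h$. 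For convergence, a Taylor expansion at $(x_j,t_{k+\frac{1}{2}})$ shows a local truncation error of size $\mathcal{O}(\tau^2+h^2)$: the Crank--Nicolson averaging (\ref{eq.6}) contributes $\mathcal{O}(\tau^2)$, the centered differences (\ref{eq.7})--(\ref{eq.8}) give $\mathcal{O}(h^2)$, and the $p=2$ case of (\ref{eq.5}) is $\mathcal{O}(h^2)$ after the customary zero-extension of $u(\cdot,t)$ across the homogeneous Dirichlet boundary so that the compatibility hypothesis of (\ref{eq.3})--(\ref{eq.4}) is satisfied. The error $e^k_j=u(x_j,t_k)-u_j^k$ then satisfies the same linear system with $e^0=0$ and forcing $\tau R^{k+\frac{1}{2}}$, so the stability estimate immediately yields $\|e^k\|_2\le CT(\tau^2+h^2)$. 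The main obstacle is the semidefiniteness argument of the second paragraph, which is exactly where Theorem \ref{th.2} is indispensable; once $\mathcal{L}_{h,s}\preceq 0$ is established, the rest is routine Crank--Nicolson bookkeeping.
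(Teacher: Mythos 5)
Your proposal is correct, but it follows a genuinely different route from the paper. The paper proves stability by the von Neumann (Fourier) method: it assumes the solution is zero-extended to the whole real line, inserts the single-mode ansatz $\mathcal{E}_j^k=\xi^k e^{ij\theta}$ into the error equation, computes the amplification factor $\xi(\theta)$ explicitly as a quotient whose real parts differ only in sign in the terms $\frac{4d_2}{h}\sin^2(\theta/2)$ and $2\nu h\sum_{\ell\ge0}\varpi_{2,\ell}^{(\alpha)}\cos(\ell\theta)$, and invokes Theorem \ref{th.2} to conclude $|\xi(\theta)|\le1$; convergence is then only asserted, with a pointer to \cite{liding2}. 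You instead work directly with the finite-dimensional Dirichlet system, split $\mathcal{L}_h$ into a skew-symmetric advection part, a negative semidefinite Laplacian, and a symmetric Toeplitz matrix $A_h$, and prove $A_h\succeq0$ by passing the quadratic form through the Toeplitz symbol — which is where Theorem \ref{th.2} enters for you, playing exactly the role it plays in the paper's $|\xi(\theta)|\le1$ computation. Your version buys a rigorous $L^2$ stability bound for the actual boundary-value problem (von Neumann analysis is, strictly, a whole-line/periodic argument, which is why the paper needs its zero-extension assumption) and it makes the convergence claim an honest consequence of consistency plus stability rather than a citation; the paper's version buys brevity and an explicit amplification factor. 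Two trivial remarks: the Crank--Nicolson identity constant should be $2\tau\langle\mathcal{L}_{h,s}v,v\rangle$ rather than $4\tau$ (the sign, which is all that matters, is unaffected), and your identification of $(I-\tfrac{\tau}{2}\mathcal{L}_h)^{-1}(I+\tfrac{\tau}{2}\mathcal{L}_h)$ with $(I+\tfrac{\tau}{2}\mathcal{L}_h)(I-\tfrac{\tau}{2}\mathcal{L}_h)^{-1}$ is justified because the two factors commute.
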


 \begin{proof}
 Assume that the solution of equation (\ref{eq.1})
can be zero extended to the whole real line $R$. Suppose $U_j^k$ is
the approximation solution of equation (\ref{eq.10}) and let $\mathcal
{E}_j^k=U_j^k-u_j^k$, then the error
equation reads as
\begin{eqnarray}
\begin{array}{lll}
\displaystyle \left(\frac{d_2}{h^2}+\frac{d_1}{2h}\right)\mathcal
{E}_{j-1}^{k+1}-\left(\frac{2}{\tau}+\frac{2d_2}{h^2}\right)\mathcal
{E}_{j}^{k+1}+
\left(\frac{d_2}{h^2}-\frac{d_1}{2h}\right)\mathcal {E}_{j+1}^{k+1}\vspace{0.2 cm}\\
\displaystyle
-\nu\left[\sum\limits_{\ell=0}^{\infty}\varpi_{2,\ell}^{(\gamma)}
\mathcal {E}_{j-l}^{k+1}
+\sum\limits_{\ell=0}^{\infty}\varpi_{2,\ell}^{(\gamma)} \mathcal
{E}_{j+l}^{k+1} \right]=-
\left(\frac{d_2}{h^2}+\frac{d_1}{2h}\right)\mathcal {E}_{j-1}^{k}\vspace{0.2 cm}\\
\displaystyle-\left(\frac{2}{\tau}-\frac{2d_2}{h^2}\right)\mathcal
{E}_{j}^{k}- \left(\frac{d_2}{h^2}-\frac{d_1}{2h}\right)\mathcal
{E}_{j+1}^{k}
+\nu\left[\sum\limits_{\ell=0}^{\infty}\varpi_{2,\ell}^{(\gamma)}
\mathcal {E}_{j-l}^{k}\right.\vspace{0.2 cm}\\\displaystyle\left.
+\sum\limits_{\ell=0}^{\infty}\varpi_{2,\ell}^{(\gamma)} \mathcal
{E}_{j+l}^{k}
\right],
j=1,\ldots,M-1,\;k=0,1,\ldots,N-1.\label{eq.11}
\end{array}
\end{eqnarray}

Let $\mathcal
{E}_j^k=\xi^k e^{ij\theta}$ be the
solution of equation (\ref{eq.11}), $i=\sqrt{-1},$  where $\theta\in[-\pi,\pi]$ is
called the phase angle. The
stability condition of scheme (\ref{eq.10}) is $|\xi(\theta)|\leq 1$
for all $\theta\in[-\pi,\pi]$.

Substituting $\varepsilon_j^k=\xi^k e^{ij\theta}$ into (\ref{eq.11}) gives
$$
\begin{array}{lll}
\displaystyle
\xi(\theta)=\frac{\left(\frac{2h}{\tau}-\frac{4d_2}{h}\sin^2\left(\frac{\theta}{2}\right)
-2\nu
h\sum\limits_{\ell=0}^{\infty}\varpi_{2,\ell}^{(\gamma)}\cos(\ell\theta)\right)-i
d_1\sin(\theta)}
{\left(\frac{2h}{\tau}+\frac{4d_2}{h}\sin^2\left(\frac{\theta}{2}\right)
+2\nu
h\sum\limits_{\ell=0}^{\infty}\varpi_{2,\ell}^{(\gamma)}\cos(\ell\theta)\right)+i
d_1\sin(\theta)}.
\end{array}
$$

According to Theorem \ref{th.2}, we easily obtain
$\displaystyle \left|\xi(\theta)\right|\leq1.$
So scheme (\ref{eq.10}) is unconditionally stable. It is easy to show the convergence order of
scheme (\ref{eq.10}) is $\mathcal O(\tau^2+h^2)$. In effect, the proof is almost the same as that of \cite{liding2}.
\end{proof}

\subsection{The 4th-order scheme in space}
Let us first consider the following differential equation
\begin{eqnarray}\displaystyle
d_2\frac{\partial^2u(x,t)}{\partial x^2} -d_1\frac{\partial
u(x,t)}{\partial x}=g(x,t).\label{eq.12}
\end{eqnarray}

Applying the technique presented in \cite{moh}, we can obtain a
fourth-order difference scheme for solving the above equation
\begin{eqnarray}\displaystyle
\Delta_4u(x_j,t)=\widetilde{\Delta}_4g(x_j,t)+\mathcal {O}(h^4).\label{eq.13}
\end{eqnarray}
where
$$
\begin{array}{lll}\displaystyle
\Delta_4=\left(d_2+\frac{d_1^2h^2}{12d_2}\right)\delta_x^2-d_1\mu_x\delta_{{x}},\;\;
\widetilde{\Delta}_4=I+\frac{h^2}{12}\left(\delta_x^2-\frac{d_1}{d_2}\mu_x\delta_{{x}}\right),
\end{array}
$$
in which $I$ is a unit operator.

Combing (\ref{eq.6}) with (\ref{eq.12}) and (\ref{eq.13}) leads to
\begin{eqnarray}
\begin{array}{lll}
&\displaystyle
 \frac{ 2}{\tau}\widetilde{\Delta}_4\cdot\left(u(x_j,t_{k+1})-u(x_j,t_{k})\right)=
 \Delta_4\cdot\left(u(x_j,t_{k+1})+u(x_j,t_{k})\right) \vspace{0.2cm}\\& \displaystyle
 +d_{\alpha}\widetilde{\Delta}_4\cdot\left(\frac{\partial^{\alpha}
u(x_j,t_{k+1})}{\partial |x|^{\alpha}}+\frac{\partial
u(x_j,t_{k})}{\partial |x|^{\alpha}}\right)+2\widetilde{\Delta}_4\cdot
s(x_j,t_{k+\frac{1}{2}})\vspace{0.2 cm}\\&+ \mathcal {O}(\tau^2+h^4).\label{eq.14}
\end{array}
\end{eqnarray}

For the Riesz derivative in equation (\ref{eq.14}), we use the
fourth-order numerical scheme
$$
\begin{array}{lll}
\displaystyle \frac{\partial^\alpha u(x,t)}{\partial{|x|^\alpha}}=
-\frac{1}{2\cos\left(\pi\alpha/2\right)h^\alpha}\left
(\sum\limits_{\ell=0}^{\infty}\varpi_{4,\ell}^{(\alpha)}u(x-\ell
h,t)+\sum\limits_{\ell=0}^{\infty}\varpi_{4,\ell}^{(\alpha)}u(x+\ell
h,t)\right)+\mathcal {O}(h^4).
\end{array}
$$
Substituting it into (\ref{eq.14}) and ignoring the truncation error
term give
\begin{eqnarray}
\begin{array}{lll}
\displaystyle
\left(\frac{2a_1}{\tau}-b_1\right)u_{j-1}^{k+1}+\left(\frac{2a_2}{\tau}-b_2\right)u_{j}^{k+1}+
\left(\frac{2a_3}{\tau}-b_3\right)u_{j+1}^{k+1}\vspace{0.2 cm}\\
\displaystyle
+\nu\left[\sum\limits_{\ell=0}^{j}\varpi_{4,\ell}^{(\alpha)}
\left(a_1u_{j-l-1}^{k+1}+a_2u_{j-l}^{k+1}+a_3u_{j-l+1}^{k+1}\right)\right.\vspace{0.2 cm}\\\displaystyle\left.
+\sum\limits_{\ell=0}^{M-j}\varpi_{4,\ell}^{(\alpha)}
\left(a_1u_{j+l-1}^{k+1}+a_2u_{j+l}^{k+1}+a_3u_{j+l+1}^{k+1}\right)
\right]\vspace{0.2 cm}
\\\displaystyle=
\left(\frac{2a_1}{\tau}+b_1\right)u_{j-1}^{k}+\left(\frac{2a_2}{\tau}+b_2\right)u_{j}^{k}+
\left(\frac{2a_3}{\tau}+b_3\right)u_{j+1}^{k}
\vspace{0.2 cm}\\\displaystyle
-\nu\left[\sum\limits_{\ell=0}^{j}\varpi_{4,\ell}^{(\alpha)}
\left(a_1u_{j-l-1}^{k}+a_2u_{j-l}^{k}+a_3u_{j-l+1}^{k}\right)\right.\vspace{0.2 cm}\\\left.\displaystyle
+\sum\limits_{\ell=0}^{M-j}\varpi_{4,\ell}^{(\alpha)}
\left(a_1u_{j+l-1}^{k}+a_2u_{j+l}^{k}+a_3u_{j+l+1}^{k}\right)
\right]\vspace{0.2 cm}\\+2a_1s_{j-1}^{k+\frac{1}{2}}+2a_2s_{j}^{k+\frac{1}{2}}+2a_3s_{j+1}^{k+\frac{1}{2}},
j=1,\ldots,M-1,\vspace{0.2 cm}\\\;k=0,1,\ldots,N-1.\label{eq.15}
\end{array}
\end{eqnarray}
Here, the parameters in (\ref{eq.15}) are given below,
$$
\begin{array}{lll}
\displaystyle
\nu=\frac{d_{\alpha}}{2\cos\left(\pi\alpha/2\right)h^\alpha},\;\;
a_1=\frac{1}{12}+\frac{d_1h}{24d_2},\;\; a_2=\frac{5}{6},\;\;
a_3=\frac{1}{12}-\frac{d_1h}{24d_2},\vspace{0.2 cm}\\ \displaystyle
b_1=\frac{d_2}{h^2}+\frac{d_1^2}{12d_2}+\frac{d_1}{2h},\,\,
b_2=-2\left(\frac{d_2}{h^2}+\frac{d_1^2}{12d_2}\right),\;\;
b_3=\frac{d_2}{h^2}+\frac{d_1^2}{12d_2}-\frac{d_1}{2h}.
\end{array}
$$

\begin{theorem} When
$\displaystyle 0<\alpha\leq 0.8439$, the numerical scheme (\ref{eq.15}) is
unconditionally stable and convergent with order $\mathcal O(\tau^2+h^4)$.
\end{theorem}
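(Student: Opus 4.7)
I would mimic the von Neumann (Fourier) template used successfully for the second-order scheme. The steps are: write the error equation for $\mathcal E_j^k = U_j^k - u_j^k$, substitute the Fourier mode $\mathcal E_j^k = \xi^k e^{ij\theta}$ with $\theta\in[-\pi,\pi]$, read off the amplification factor $\xi(\theta)$, and verify $|\xi(\theta)|\leq 1$. To keep the algebra manageable I would denote by $\widetilde P(\theta)$, $P(\theta)$, $Q(\theta)$ the Fourier symbols of $\widetilde\Delta_4$, $\Delta_4$ and the discrete Riesz operator $-\frac{1}{2\cos(\pi\alpha/2)h^\alpha}\sum_\ell \varpi_{4,\ell}^{(\alpha)}(\cdot)$. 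The scheme then takes the compact form $(a-b)U^{k+1} = (a+b)U^k + \text{source}$ with $a = 2\widetilde P/\tau$ and $b = P + d_\alpha \widetilde P\, Q$, so $\xi = (a+b)/(a-b)$, and the stability criterion $|\xi|\leq 1$ is equivalent to $\mathrm{Re}(a\bar b)\leq 0$.

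Decomposing $\widetilde P = \widetilde P_R + i\widetilde P_I$ and $P = P_R + iP_I$ (the real parts coming from $I$ and $\delta_x^2$, the imaginary parts from $\mu_x\delta_x$), and noting that $Q$ is real, the stability condition rewrites as
\[
d_\alpha\, Q\,|\widetilde P|^2 \;+\; \widetilde P_R P_R \;+\; \widetilde P_I P_I \;\leq\; 0.
\]
The first summand is nonpositive because $\cos(\pi\alpha/2)>0$ together with $\sum_\ell \varpi_{4,\ell}^{(\alpha)}\cos(\ell\theta)\geq 0$; the latter is guaranteed by Theorem \ref{th.3} precisely in the range $0<\alpha\leq 0.8439$, which is exactly where the stated restriction on $\alpha$ enters. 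Since $\widetilde P_R = 1 - \tfrac{1}{3}\sin^2(\theta/2) \geq \tfrac{2}{3}$ and $P_R\leq 0$, the second summand is also nonpositive. The sticky term is the third: $\widetilde P_I P_I = \tfrac{d_1^2}{12 d_2}\sin^2\theta \geq 0$. I would absorb it into $\widetilde P_R P_R$ by setting $s = \sin^2(\theta/2)\in[0,1]$, using $\sin^2\theta = 4s(1-s)$, and checking via a short algebraic manipulation that $\widetilde P_R P_R + \widetilde P_I P_I$ collapses into a sum of two manifestly nonpositive contributions, one proportional to $-s(1-s/3)d_2/h^2$ and the other to $-s^2 d_1^2/d_2$. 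This delivers $|\xi(\theta)|\leq 1$ for all $\theta$ and all $\tau, h$, hence unconditional stability.

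Convergence of order $\mathcal O(\tau^2+h^4)$ then follows from Lax--Richtmyer. The consistency analysis splits cleanly into the $\mathcal O(\tau^2)$ Crank--Nicolson truncation from (\ref{eq.6}), the $\mathcal O(h^4)$ compact-operator identity (\ref{eq.13}) for the advection--diffusion part, and the $\mathcal O(h^4)$ Riesz weight $\varpi_{4,\ell}^{(\alpha)}$; the bookkeeping can be patterned after that of \cite{liding2}. I expect the principal technical obstacle to be the $\widetilde P_I P_I$ contribution just discussed: the compact operator $\widetilde\Delta_4$ couples convection and diffusion through a purely imaginary symbol, and one must check carefully that the resulting cross-term cannot overturn the dissipation supplied by the diffusion and Riesz pieces. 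Everything else is an essentially direct repetition of the argument for the second-order scheme, with Theorem \ref{th.3} playing the role that Theorem \ref{th.2} played there.
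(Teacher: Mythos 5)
Your proposal is correct and follows essentially the same route as the paper: a von Neumann analysis of the error equation in which the Riesz contribution is controlled by Theorem \ref{th.3} (hence the restriction $0<\alpha\leq 0.8439$) and the advection--diffusion part is shown to be dissipative. The only difference is organizational: you phrase the criterion as $\mathrm{Re}(a\bar b)\leq 0$ for the operator symbols and explicitly verify that the convection--diffusion cross term $\widetilde P_I P_I=\tfrac{d_1^2}{12d_2}\sin^2\theta$ is dominated by $\widetilde P_R P_R$, a step the paper's displayed amplification factor $\xi(\theta)$ handles only implicitly in assembling the quantities $s_2$ and $s_4$.
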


\begin{proof}
Similar to the above subsection, we can
get the error equation of equation
(\ref{eq.15})
and let $\mathcal
{E}_j^k=\xi^k e^{ij\theta}$,
$\theta\in[-\pi,\pi].$ Substituting it into the error equation gives
$$
\begin{array}{lll}
\displaystyle
\xi(\theta)=\frac{\left(s_1-s_2-s_3\nu\right)-is_4\nu\sin(\theta)}
{\left(s_1+s_2+s_3\nu\right)+is_4\nu\sin(\theta)}
\end{array}
$$
where
$$
\begin{array}{lll}
\displaystyle
s_1=\frac{2}{\tau}\left(1-\frac{1}{3}\sin^2\left(\frac{\theta}{2}\right)\right),\;\;
s_2=2\sin^2\left(\frac{\theta}{2}\right)\left(\frac{2d_2}{h^2}+\frac{d_1^2}{6d_2}\right),\vspace{0.3
cm}\\ \displaystyle
s_3=2\left(1-\frac{1}{3}\sin^2\left(\frac{\theta}{2}\right)\right)\sum
\limits_{\ell=0}^{\infty}\varpi_{4,\ell}^{(\alpha)}\cos(\ell\theta),s_4=\left(\frac{d_1h}{6\tau
d_2}+\frac{d_1}{h}\right)-\frac{d_1h}
{6d_2}\sum\limits_{\ell=0}^{\infty}\varpi_{4,\ell}^{(\alpha)}\cos(\ell\theta).
\end{array}
$$

Note that $\nu, s_2,s_3\geq0$. It follows from Theorem \ref{th.3} that
$
\displaystyle \left|\xi(\theta)\right|\leq1
$
if $\displaystyle 0<\alpha\leq 0.8439$. So
scheme (\ref{eq.15}) is unconditionally stable if $\displaystyle 0<\alpha\leq
0.8439$. For such $\alpha\in(0,0.8439]$, the convergence order of scheme (\ref{eq.15}) is
$\mathcal {O}(\tau^2+h^4)$.
\end{proof}

\subsection{The 6th-order scheme in space}

From Taylor expansion, we have
\begin{equation}
\displaystyle \frac{\partial u(x_j,t)}{\partial
x}=\mu_x\delta_{{x}}\left(I-\frac{h^2}{6}\delta_x^2\right)u(x_j,t)+\frac{h^4}{30}\frac{\partial^5
u(x_j,t)}{\partial x^5}+\mathcal {O}(h^6),\label{eq.17}
\end{equation}
\begin{equation}
\displaystyle \frac{\partial^2 u(x_j,t)}{\partial
x^2}=\delta_{{x}}^2\left(I-\frac{h^2}{12}\delta_x^2\right)u(x_j,t)+\frac{h^4}{90}\frac{\partial^6
u(x_j,t)}{\partial x^6}+\mathcal {O}(h^6),\label{eq.18}
\end{equation}
\begin{equation}
\displaystyle \frac{\partial^3 u(x_j,t)}{\partial
x^3}=\mu_x\delta_{{x}}^3u(x_j,t)+\mathcal {O}(h^2),\label{eq.19}
\end{equation}
\begin{equation}
\displaystyle \frac{\partial^4 u(x_j,t)}{\partial
x^4}=\delta_{{x}}^4u(x_j,t)+\mathcal {O}(h^2).\label{eq.20}
\end{equation}

According to (\ref{eq.12}), one gets
\begin{equation}
\displaystyle \frac{\partial^5 u(x_j,t)}{\partial
x^5}=\frac{d_1}{d_2}\frac{\partial^4 u(x_j,t)}{\partial
x^4}+\frac{1}{d_2}\frac{\partial^3 g(x_j,t)}{\partial x^3}\label{eq.21}
\end{equation}
and
\begin{eqnarray}

$$
Its analytical solution is $u(x,t)=\exp(t)x^6(1-x)^6$ {and satisfy the corresponding initial and boundary values conditions.}

We solve this problem with the numerical schemes (\ref{eq.10}) and (\ref{eq.15}) for
different values of $\tau$, $h$ and $\alpha$. The
absolute error, temporal and spatial convergence orders are listed
in Tables \ref{tab.6} and \ref{tab.7}, which display that the numerical results are in line with our
theoretical analysis.

In the following, we give a slightly different example.

\begin{table}[h]
\begin{center}
\caption{ The absolute errors, temporal and spatial convergence
orders of Example 2 by difference scheme (\ref{eq.10}).}\label{tab.6}
 \begin{footnotesize}
\begin{tabular}{c c c c c c }\hline
   &   &&temporal &spatial
   \\
  $\alpha$ &&  the maximum errors & convergence
orders& convergence orders
  \\
  \hline
  $0.2 $& $h=\frac{1}{10},\tau=\frac{1}{10}$ &   2.581219e-005&  ---&    ---
\vspace{0.1 cm}\\
  $$  & $h=\frac{1}{20},\tau=\frac{1}{20}$&       6.217660e-006 & 2.0536&    2.0536\vspace{0.1 cm}\\
  $$& $h=\frac{1}{40},\tau=\frac{1}{40}$ &        1.536085e-006&    2.0171 &    2.0171 \vspace{0.1 cm}\\
  $$& $h=\frac{1}{80},\tau=\frac{1}{80}$ &      3.844480e-007&    1.9984 &     1.9984\vspace{0.1 cm}\\
 \hline
 $0.3 $& $h=\frac{1}{10},\tau=\frac{1}{10}$ &   2.514418e-005&  ---&    ---
\vspace{0.1 cm}\\
  $$  & $h=\frac{1}{20},\tau=\frac{1}{20}$&        6.061411e-006 &    2.0525&      2.0525\vspace{0.1 cm}\\
  $$& $h=\frac{1}{40},\tau=\frac{1}{40}$ &       1.498825e-006&   2.0158 &    2.0158 \vspace{0.1 cm}\\
  $$& $h=\frac{1}{80},\tau=\frac{1}{80}$ &       3.755193e-007&   1.9969 &    1.9969 \vspace{0.1 cm}\\
 \hline
 $0.4 $& $h=\frac{1}{10},\tau=\frac{1}{10}$ &    2.416676e-005&  ---&    ---
\vspace{0.1 cm}\\
  $$  & $h=\frac{1}{20},\tau=\frac{1}{20}$&        5.842791e-006&   2.0483&    2.0483\vspace{0.1 cm}\\
  $$& $h=\frac{1}{40},\tau=\frac{1}{40}$ &        1.448271e-006&    2.0123 &      2.0123 \vspace{0.1 cm}\\
  $$& $h=\frac{1}{80},\tau=\frac{1}{80}$ &       3.636282e-007&   1.9938 &     1.9938 \vspace{0.1 cm}\\
 \hline
  $0.5 $& $h=\frac{1}{10},\tau=\frac{1}{10}$ &    2.270557e-005&  ---&    ---
\vspace{0.1 cm}\\
  $$  & $h=\frac{1}{20},\tau=\frac{1}{20}$&        5.532646e-006 &   2.0370&    2.0370\vspace{0.1 cm}\\
  $$& $h=\frac{1}{40},\tau=\frac{1}{40}$ &       1.379247e-006&    2.0041 &      2.0041 \vspace{0.1 cm}\\
  $$& $h=\frac{1}{80},\tau=\frac{1}{80}$ &      3.477777e-007&   1.9876 &     1.9876\vspace{0.1 cm}\\
\hline
  $0.6 $& $h=\frac{1}{10},\tau=\frac{1}{10}$ &    2.043415e-005&  ---&    ---
\vspace{0.1 cm}\\
  $$  & $h=\frac{1}{20},\tau=\frac{1}{20}$&        5.079755e-006 &   2.0082&     2.0082\vspace{0.1 cm}\\
  $$& $h=\frac{1}{40},\tau=\frac{1}{40}$ &       1.283408e-006&    1.9848 &      1.9848 \vspace{0.1 cm}\\
  $$& $h=\frac{1}{80},\tau=\frac{1}{80}$ &     3.264917e-007&   1.9749 &     1.9749 \vspace{0.1 cm}\\
 \hline
  $0.7 $& $h=\frac{1}{10},\tau=\frac{1}{10}$ &    1.664449e-005&  ---&    ---
\vspace{0.1 cm}\\
  $$  & $h=\frac{1}{20},\tau=\frac{1}{20}$&         4.378341e-006 &   1.9266&    1.9266\vspace{0.1 cm}\\
  $$& $h=\frac{1}{40},\tau=\frac{1}{40}$ &        1.145019e-006&    1.9350 &     1.9350 \vspace{0.1 cm}\\
  $$& $h=\frac{1}{80},\tau=\frac{1}{80}$ &       2.972789e-007&    1.9455 &     1.9455 \vspace{0.1 cm}\\
 \hline
\end{tabular}
 \end{footnotesize}
 \end{center}
 \end{table}

\begin{table}[h]
\begin{center}
\caption{ The maximum errors, temporal and spatial convergence
orders of Example 2 by difference scheme (\ref{eq.15}).}\label{tab.7}
 \begin{footnotesize}
\begin{tabular}{c c c c c c }\hline
   &   &&temporal &spatial\\
  $\alpha$ &&  the maximum errors & convergence
orders& convergence orders
  \\\hline
  $0.2 $& $h=\frac{1}{4},\tau=\frac{1}{4}$ &   1.151043e-004&  ---&    ---
\vspace{0.1 cm}\\
  $$  & $h=\frac{1}{8},\tau=\frac{1}{16}$&       4.361384e-006&   2.3610&     4.7220\vspace{0.1 cm}\\
  $$& $h=\frac{1}{16},\tau=\frac{1}{64}$ &       2.346706e-007&   2.1081 &     4.2161\vspace{0.1 cm}\\
  $$& $h=\frac{1}{32},\tau=\frac{1}{256}$ &       2.036847e-008&    1.7631 &    3.5262 \vspace{0.1 cm}\\
 \hline
  $0.3 $& $h=\frac{1}{4},\tau=\frac{1}{4}$ &   1.128702e-004&  ---&    ---
\vspace{0.1 cm}\\
  $$  & $h=\frac{1}{8},\tau=\frac{1}{16}$&       4.362181e-006 &   2.3468&     4.6935\vspace{0.1 cm}\\
  $$& $h=\frac{1}{16},\tau=\frac{1}{64}$ &       2.461975e-007&    2.0736 &      4.1472\vspace{0.1 cm}\\
  $$& $h=\frac{1}{32},\tau=\frac{1}{256}$ &        2.396154e-008&   1.6805 &     3.3610\vspace{0.1 cm}\\
 \hline
  $0.4 $& $h=\frac{1}{4},\tau=\frac{1}{4}$ &    1.088961e-004&  ---&    ---
\vspace{0.1 cm}\\
  $$  & $h=\frac{1}{8},\tau=\frac{1}{16}$&        4.433621e-006 &   2.3091&    4.6183\vspace{0.1 cm}\\
  $$& $h=\frac{1}{16},\tau=\frac{1}{64}$ &        2.870612e-007&     1.9746 &    3.9491\vspace{0.1 cm}\\
  $$& $h=\frac{1}{32},\tau=\frac{1}{256}$ &       2.910816e-008&  1.6509 &     3.3019\vspace{0.1 cm}\\
 \hline
  $0.5 $& $h=\frac{1}{4},\tau=\frac{1}{4}$ &  1.018053e-004&  ---&    ---
\vspace{0.1 cm}\\
  $$  & $h=\frac{1}{8},\tau=\frac{1}{16}$&     4.654112e-006 &  2.2256&      4.4512\vspace{0.1 cm}\\
  $$& $h=\frac{1}{16},\tau=\frac{1}{64}$ &       3.607253e-007&    1.8447 &    3.6895 \vspace{0.1 cm}\\
  $$& $h=\frac{1}{32},\tau=\frac{1}{256}$ &      3.674011e-008&   1.6478 &     3.2955\vspace{0.1 cm}\\
 \hline
  $0.6 $& $h=\frac{1}{4},\tau=\frac{1}{4}$ &    8.897936e-005&  ---&    ---
\vspace{0.1 cm}\\
  $$  & $h=\frac{1}{8},\tau=\frac{1}{16}$&        5.201584e-006 &   2.0482&    4.0964\vspace{0.1 cm}\\
  $$& $h=\frac{1}{16},\tau=\frac{1}{64}$ &        4.980729e-007&   1.6923 &     3.3845\vspace{0.1 cm}\\
  $$& $h=\frac{1}{32},\tau=\frac{1}{256}$ &       4.862623e-008&    1.6783 &      3.3566\vspace{0.1 cm}\\
 \hline
  $0.7 $& $h=\frac{1}{4},\tau=\frac{1}{4}$ &    6.521192e-005&  ---&    ---
\vspace{0.1 cm}\\
  $$  & $h=\frac{1}{8},\tau=\frac{1}{16}$&       6.540139e-006 &   1.6588&     3.3177\vspace{0.1 cm}\\
  $$& $h=\frac{1}{16},\tau=\frac{1}{64}$ &        7.724068e-007&  1.5410 &    3.0819 \vspace{0.1 cm}\\
  $$& $h=\frac{1}{32},\tau=\frac{1}{256}$ &       6.867979e-008&   1.7457 &      3.4914 \vspace{0.1 cm}\\
 \hline
\end{tabular}
 \end{footnotesize}
 \end{center}
 \end{table}

\textbf{Example 3.} Consider the following equation
$$
\begin{array}{rrr}
\displaystyle \frac{\partial u(x,t)}{\partial t} =\frac{\partial^2
u(x,t)}{\partial x^2}-2\frac{\partial u(x,t)}{\partial
x}+\alpha^2\frac{\partial^{\alpha} u(x,t)}{\partial
|x|^{\alpha}}+s(x,t),\;\; 0\leq x\leq 1,\;0\leq t\leq1,
\end{array}
$$
 in which
$$
\begin{array}{lll}
\displaystyle s(x,t)&=&\displaystyle
x^6(1-x)^6\left[\cos(t)(x^4-2x^3+ x^2)+\sin(t)(32x^3-288x^2+256x-56)
\right]\vspace{0.2 cm}\\&&
\displaystyle+\frac{\alpha^2}{2}\sin(t)\sec\left(\frac{\pi}{2}\alpha\right)\left\{\frac{\Gamma(9)}{\Gamma(9-\alpha)}
\left[x^{8-\alpha}+(1-x)^{8-\alpha}\right] \right.\vspace{0.2 cm}\\&&\displaystyle -\frac{8\Gamma(10)}{\Gamma(10-\alpha)}
\left[x^{9-\alpha}+(1-x)^{9-\alpha}\right]+\frac{28\Gamma(11)}{\Gamma(11-\alpha)}
\left[x^{10-\alpha}+(1-x)^{10-\alpha}\right]\vspace{0.2 cm}\\&&
\displaystyle -\frac{56\Gamma(12)}{\Gamma(12-\alpha)}
\left[x^{11-\alpha}+(1-x)^{11-\alpha}\right]
+\frac{70\Gamma(13)}{\Gamma(13-\alpha)}
\left[x^{12-\alpha}+(1-x)^{12-\alpha}\right]\vspace{0.2 cm}\\&&
\displaystyle -\frac{56\Gamma(14)}{\Gamma(14-\alpha)}
\left[x^{13-\alpha}+(1-x)^{13-\alpha}\right]
+\frac{28\Gamma(15)}{\Gamma(15-\alpha)}
\left[x^{14-\alpha}+(1-x)^{14-\alpha}\right] \vspace{0.2 cm}\\&&
\displaystyle\left. -\frac{8\Gamma(16)}{\Gamma(16-\alpha)}
\left[x^{15-\alpha}+(1-x)^{15-\alpha}\right]
+\frac{\Gamma(17)}{\Gamma(17-\alpha)}
\left[x^{16-\alpha}+(1-x)^{16-\alpha}\right] \right\}.
\end{array}
$$
Its exact solution is $u(x,t)=\sin(t)x^8(1-x)^8$ and satisfy the accordingly initial and boundary values conditions.

The absolute error, temporal and spatial convergence orders are
listed in Table \ref{tab.8} by numerical scheme (\ref{eq.27}). The numerical results agree with the theoretical results.

\begin{table}[h]
\begin{center}
\caption{ The maximum errors, temporal and spatial convergence
orders of Example 3 by difference scheme (\ref{eq.27}).}\label{tab.8}
 \begin{footnotesize}
\begin{tabular}{c c c c c c }\hline
   &   &&temporal &spatial
   \\
  $\alpha$ &&  the maximum errors& convergence
orders& convergence orders
  \\
  \hline
  $0.2 $& $h=\frac{1}{8},\tau=\frac{1}{8}$ &    1.360207e-007&  ---& ---\vspace{0.1 cm}\\
  $$  & $h=\frac{1}{16},\tau=\frac{1}{64}$&        2.071201e-009 &     2.0124&     6.0372\vspace{0.1 cm}\\
  $$& $h=\frac{1}{32},\tau=\frac{1}{512}$ &       3.348089e-011&    1.9837 &    5.9510 \vspace{0.1 cm}\\
  $$& $h=\frac{1}{64},\tau=\frac{1}{4096}$ &       5.235085e-013&   1.9997 &     5.9990 \vspace{0.1 cm}\\
 \hline
  $0.3 $& $h=\frac{1}{8},\tau=\frac{1}{8}$ &    1.356431e-007&  ---&    ---\vspace{0.1 cm}\\
  $$  & $h=\frac{1}{16},\tau=\frac{1}{64}$&        2.092867e-009 &   2.0061&    6.0182\vspace{0.1 cm}\\
  $$& $h=\frac{1}{32},\tau=\frac{1}{512}$ &        3.254863e-011&    2.0022 &      6.0067 \vspace{0.1 cm}\\
  $$& $h=\frac{1}{64},\tau=\frac{1}{4096}$ &        4.855887e-013&   2.0222 &     6.0667 \vspace{0.1 cm}\\
 \hline
  $0.4 $& $h=\frac{1}{8},\tau=\frac{1}{8}$ &    1.348600e-007&  ---&    ---\vspace{0.1 cm}\\
  $$  & $h=\frac{1}{16},\tau=\frac{1}{64}$&        2.146379e-009 &   1.9911&    5.9734\vspace{0.1 cm}\\
  $$& $h=\frac{1}{32},\tau=\frac{1}{512}$ &        2.972961e-011&   2.0580&     6.1739 \vspace{0.1 cm}\\
  $$& $h=\frac{1}{64},\tau=\frac{1}{4096}$ &        3.852828e-013&    2.0899 &      6.2698\vspace{0.1 cm}\\
 \hline
  $0.5 $& $h=\frac{1}{8},\tau=\frac{1}{8}$ &    1.335205e-007&  ---&    ---\vspace{0.1 cm}\\
  $$  & $h=\frac{1}{16},\tau=\frac{1}{64}$&       2.246228e-009&   1.9645&     5.8934\vspace{0.1 cm}\\
  $$& $h=\frac{1}{32},\tau=\frac{1}{512}$ &       2.200601e-011&    2.2245 &     6.6735 \vspace{0.1 cm}\\
  $$& $h=\frac{1}{64},\tau=\frac{1}{4096}$ &      2.816274e-013&  2.0960 &      6.2880\vspace{0.1 cm}\\
 \hline
  $0.6 $& $h=\frac{1}{8},\tau=\frac{1}{8}$ &    1.322258e-007&  ---&    ---\vspace{0.1 cm}\\
  $$  & $h=\frac{1}{16},\tau=\frac{1}{64}$&       2.372915e-009&   1.9334&      5.8002\vspace{0.1 cm}\\
  $$& $h=\frac{1}{32},\tau=\frac{1}{512}$ &      1.827817e-011&   2.3401&    7.0204\vspace{0.1 cm}\\
  $$& $h=\frac{1}{64},\tau=\frac{1}{4096}$ &       4.506292e-013&   1.7807&     5.3420 \vspace{0.1 cm}\\
 \hline
  $0.7 $& $h=\frac{1}{8},\tau=\frac{1}{8}$ &   1.357968e-007&  ---&    ---\vspace{0.1 cm}\\
  $$  & $h=\frac{1}{16},\tau=\frac{1}{64}$&        3.805230e-009&   1.7191&    5.1573\vspace{0.1 cm}\\
  $$& $h=\frac{1}{32},\tau=\frac{1}{512}$ &       6.671019e-011&   1.9446&     5.8339\vspace{0.1 cm}\\
  $$& $h=\frac{1}{64},\tau=\frac{1}{4096}$ &       1.819328e-012&  1.7321 &      5.1964 \vspace{0.1 cm}\\
 \hline
\end{tabular}
 \end{footnotesize}
 \end{center}
 \end{table}

\section{Conclusions}
In this paper, we construct high-order (from 2nd-order to 6th-order)
numerical schemes to approximate the Riesz derivatives.
Next, we develop three kinds of difference schemes for the Riesz space fractional 
turbulent diffusion equation. The stability of the
derived numerical algorithms are shown by the Fourier method. The
convergence orders are $\mathcal {O}(\tau^2+h^2)$, $\mathcal
{O}(\tau^2+h^4)$ and $\mathcal {O}(\tau^2+h^6)$, respectively.
Finally, numerical results confirm the theoretical analysis.\\

\textbf{\emph{Appendix A}}

\begin{proof}
 (1) Direct calculations can finish it so we omit the details.

(2) See \cite{liding3} for details.

(3) Now we show the case $\alpha\in(1,2)$.
 We firstly show that $\varpi_{2,j}^{(\alpha)}>0$
for $\ell\geq5$. For convenience, denote $\alpha=1+\gamma$, where $0<\gamma<1$. Lengthy calculations give
$$

$$

The proof is thus complete.
\end{proof}

\maketitle


\begin{thebibliography}{[ASMR]}

\bibitem{met}
 R. Metzler and J. Klafter, {\em The random walk's guide to anomalous diffusion: a fractional
dynamic approach}, {Phys. Rep.}, 339 (2000), 1--77. MR1809268 (2001k:82082).

%

\bibitem{ervin}
 V. J. Ervin, N. Heuer, and J. P. Roop, {\em Numerical approximation of a
time dependent, nonlinear, spacefractional diffusion equation}, SIAM J. Numer. Anal.,
45 (2007), 572--591. MR2300288 (2008c:65255).

\bibitem{tad}
 C. Tadjeran and M. M. Meerschaert, {\em A second-order accurate numerical
method for the two-dimensional fractional diffusion equation},
J. Comput. Phys., 220 (2007), 813--823. MR2284325 (2008a:65159).

\bibitem{wang}
H. Wang and N. Du, {\em Fast alternating-direction finite difference methods for three-dimensional
 space-fractional diffusion equations}, J. Comput. Phys., 258 (2014), 305--318. MR3145280.

\bibitem{mus}
 K. Mustapha and W. McLean, {\em Uniform convergence for a discontinuous Galerkin,
time stepping method applied to a fractional diffusion equation},
IMA J. Numer. Anal., 32 (2012), 906--925. MR2954734.

\bibitem{DengHesthaven2003}
 W. H. Deng and J. S. Hesthaven, {\em Discontinuous Galerkin methods for fractional diffusion equations},
{M2AN.}, 47 (2013), 1821--1843. MR3123379.

\bibitem{jin}
 B. Jin, R. Lazarov, and Z. Zhou, {\em Error estimates for a semidiscrete finite
element method for fractional order parabolic equations},
SIAM J. Numer. Anal., 51 (2013), 445--466.
MR3033018.

\bibitem{zay}
 M. Zayernouri and G. E. Karniadakis, {\em Fractional spectral collocation methods},
SIAM J. Sci. Comput., 36 (2014), A40--A62. MR3150177.

\bibitem{lub}
 C. H. Lubich, {\em Discretized fractional calculus}, SIAM J.
Math. Anal. 17 (1986), 704--719. MR0838249 (87f:26006).
%
\bibitem{liding2}
 H. F. Ding, C. P. Li, and Y. Q. Chen, {\em High-order
algorithms for Riesz derivative and their applications
(II)}, J. Comput. Phys. (2014), http://dx.doi.org/10.1016/j.jcp.2014.06.007.

\bibitem{liding3}
 C. P. Li and H. F. Ding, {\em Higher order finite
difference method for reaction and anomalous diffusion equation},
 Appl. Math. Model.,
38 (2014), 3802--3821. MR3233808.

\bibitem{dim}
 Y. Dimitrov, {\em Numerical Approximations for fractional
differential equations}, arXiv:1311.3935v1.

\bibitem{yus}
 S. B. Yuste and L. Acedo, {\em An explicit finite difference method and a new von
Neumann-type stability analysis for fractional diffusion equations}, SIAM J. Numer.
Anal., 42 (2005), 1862--1874. MR2139227 (2006d:45008).

\bibitem{moh}
 A. Mohebbi and M. Abbaszadeh, {\em Compact finite difference scheme for the
solution of time fractional advection-dispersion equation}, Numer.
Algor., 63 (2013), 431--452. MR3071185.

\bibitem{kjc} J. C. Kuang, {\em Applied Inequalities}, 2th ed.,
Shandong Scienic and Technology Press, 2012. MR1305610.

\end{thebibliography}
\end{document}